\newtheorem{theorem}{Theorem}
\newtheorem{lemma}[theorem]{Lemma}
\newtheorem{rem}[theorem]{Remark}
\newtheorem{proposition}[theorem]{Proposition}
\newtheorem{corollary}[theorem]{Corollary}
\theoremstyle{definition}
\DeclareMathOperator{\Int}{int}
\DeclareMathOperator{\M}{\mathcal{M}}
\DeclareMathOperator{\MT}{\mathcal{M}_T}
\newcommand{\OO}{\mathbf{0}}
\DeclareMathOperator{\F}{\mathcal{F}}
\DeclareMathOperator{\Bl}{\mathcal{B}}
\DeclareMathOperator{\lang}{\Bl}
\DeclareMathOperator{\G}{\mathcal{G}}
\DeclareMathOperator{\Mon}{\mathcal{M}}
\DeclareMathOperator{\Rst}{\mathcal{R}}
\DeclareMathOperator{\Cs}{\mathcal{C}^s}
\DeclareMathOperator{\Cp}{\mathcal{C}^p}
\DeclareMathOperator{\CsX}{\mathcal{C}^s_X}
\DeclareMathOperator{\CpX}{\mathcal{C}^p_X}
\DeclareMathOperator{\CsY}{\mathcal{C}^s_Y}
\DeclareMathOperator{\CpY}{\mathcal{C}^p_Y}
\newcommand{\V}{\mathscr{V}}
\newcommand{\U}{\mathscr{U}}
\newcommand{\alf}{\mathscr{A}}
\newcommand{\RR}{\mathbf{R}}
\newcommand{\XR}{X_\RR}
\newcommand{\htop}{h_\textrm{top}}
\newcommand{\set}[1]{\left\{#1\right\}}
\newcommand{\tword}[2]{\genfrac{\lfloor}{\rceil}{0pt}{1}{#1}{#2}}
\newcommand{\eps}{\varepsilon}
\newcommand{\Z}{\mathbb{Z}}
\newcommand{\N}{\mathbb{N}}
\newcommand{\Zp}{{\Z_+}}
\author{Dominik Kwietniak, Piotr Oprocha \and Micha{\l} Rams}
\address[D. Kwietniak]{
Faculty of Mathematics and Computer Science, Jagiellonian University in Krakow, ul. \L o\-jasiewicza 6, 30-348 Krak\'ow, Poland}\email{dominik.kwietniak@uj.edu.pl}
\urladdr{www.im.uj.edu.pl/DominikKwietniak/}
\address[P. Oprocha]{AGH University of Science and Technology, Faculty of Applied
Mathematics, al.
Mickiewicza 30, 30-059 Krak\'ow, Poland}
\email{oprocha@agh.edu.pl}
\address[M. Rams]{Institute of Mathematics, Polish Academy of Sciences,
ul. \'Sniadeckich 8, 00-956 Warszawa, Poland
}
\email[M. Rams]{rams@impan.gov.pl}
\title[On entropy of systems with almost specification]{On entropy of dynamical systems with almost specification} 
\date{\today}
\begin{document}

\begin{abstract}
We construct a family of shift spaces with almost specification and multiple measures of maximal entropy.
This answers a question from Climenhaga and Thompson [\emph{Israel J. Math.} \textbf{192} (2012), no. 2, 785--817]. Elaborating on our examples we also prove that some sufficient conditions for every subshift factor of a shift space to be intrinsically ergodic given by Climenhaga and Thompson are in some sense best possible, moreover, the weak specification property neither implies intrinsic ergodicity, nor follows from almost specification. We also construct a dynamical system with the weak specification property, which does not have the almost specification property. We prove that the minimal points are dense in the support of any invariant measure of a system with the almost specification property. Furthermore, if a system with almost specification has an invariant measure with non-trivial support, then it also has uniform positive entropy over the support of any invariant measure and can not be minimal.
\end{abstract}
\subjclass[2010]{37B40 (primary) 37A35, 37B05, 37D45 (secondary)}\keywords{measure of maximal entropy, intrinsic ergodicity, almost specification property, uniform positive entropy}
\maketitle

We study dynamical systems with weaker forms of the specification property. We focus on the topological entropy and
the problem of uniqueness of a measure of maximal entropy for systems with the almost specification or weak specification
property (we also prove that these two specification-like properties are non-equivalent --- neither of them implies the other).
Recall that dynamical systems with a unique measure of maximal entropy are known as \emph{intrinsically ergodic}.
The problem of intrinsic ergodicity of shift spaces with almost specification was mentioned in \cite[p. 798]{CT}, where another approach was developed in order to prove that certain classes of symbolic systems and their factors are intrinsically ergodic. We solve the problem in the negative and provide examples of shift spaces with the weak (almost) specification property and many measures of maximal entropy\footnote{When we communicated our solution to Dan Thompson, he kindly informed us that Ronnie Pavlov had also solved the same problem (see \cite{Pavlov}). Both solutions were independently discovered in July 2014. We would like to thank Ronnie Pavlov for sharing his work with us. 
}. Our construction allows us to prove that the sufficient condition for the inheritance of intrinsic ergodicity by factors from the Climenhaga-Thompson paper \cite{CT} is optimal --- if this condition does not hold, then the symbolic systems to which Theorem of \cite{CT} applies may have a factor with many measures of maximal entropy.
We also prove that nontrivial dynamical systems with the almost specification property and a full invariant measure have uniform positive entropy and horseshoes (subsystems which are extensions of the full shift over a finite alphabet). It follows that  minimal points are dense in the measure center (the smallest closed invariant subset of the phase space which contains the support of \emph{every} invariant measure) of a system with almost specification and that these systems cannot be minimal if they are nontrivial.


\section{Basic definitions and notation}
We write $\N=\{1,2,3,\ldots\}$ and $\Zp=\{0,1,2,\ldots\}$.

A \emph{dynamical system} consists of a compact metric space $X$ together with a continuous map $T\colon X\to X$.
By $\rho$ we denote a metric on $X$ compatible with the topology. Let $\U$ and $\V$ be open covers of $X$.
By $N(\U)$ we denote the number of sets in a finite subcover of a $\U$ with smallest cardinality. By $T^{-i}\U$ ($i\in\Zp$) we mean
the cover $\set{T^{-i}(U): U\in \U}$ and $\U \vee \V=\set{U\cap V : U\in \U, V\in \V}$.
The \emph{topological entropy}
$h(T,\U)$ of an open cover $\U$ of $X$ is defined (see \cite{Walters}) as
$$
\lim_{n\to \infty}\frac{1}{n}\log N\big(\bigvee_{i=0}^{n-1}T^{-i}\U\big).
$$
The \emph{topological entropy} of $T$ is
\[
\htop(T)=\sup_{\U:\text{open cover of }X} \htop(T,\U).
\]
Let $\MT(X)$ be the space of $T$-invariant Borel probability measures on $X$. We denote the measure-theoretic 
entropy of $\mu\in\MT(X)$ by $h_\mu(T)$ (see \cite{Walters}). The variational principle states that
\[
\htop(T)=\sup_{\mu\in\MT(X)} h_\mu(T).
\]
A measure $\mu\in\MT(X)$ that attains this supremum is a \emph{measure of maximal entropy}. We say that a system $(X,T)$ is 
\emph{intrinsically ergodic} if it has a unique measure of maximal entropy.

Let $a,b\in\Zp$, $a\leq b$. The \emph{orbit segment} of $x\in X$ over $[a,b]$ is the~sequence
\[
T^{[a,b]}(x)=(T^a(x),T^{a+1}(x),\ldots, T^b(x)).
\]
We also write $T^{[a,b)}(x)=T^{[a,b-1]}(x)$.
A \emph{specification} is a family of orbit segments
\[\xi=\{T^{[a_j,b_j]}(x_j)\}_{j=1}^n\] such that  $n\in\N$ and $b_j<a_{j+1}$ for all $1\le j <n$.

The \emph{Bowen distance between $x,y\in X$ along a finite set $\Lambda\subset\N$} is
\[
\rho^T_\Lambda(x, y) = \max\{\rho(T^j(x), T^j(y)) : j \in\Lambda\}.
\]
By the \emph{Bowen ball (of radius $\eps$, centered at $x\in X$) along $\Lambda$}
we mean the set
\[
B_\Lambda(x, \eps) = \{y \in X : \rho^T_\Lambda(x, y) < \eps\}.
\]

\section{Specification and alikes}
A dynamical system has the periodic specification property if one can approximate distinct pieces of orbits by single periodic orbits with a certain uniformity.
Bowen introduced this property in \cite{Bowen} and showed that a basic set for an axiom A diffeomorphism $T$ can be partitioned into a finite number of disjoint sets $\Lambda_1,\ldots,\Lambda_k$ which are permuted by $T$ and $T^k$ restricted to $\Lambda_j$ has the specification property for each $j=1,\ldots,k$. There are many generalizations of this notion. One of them is due to Dateyama, who introduced in \cite{Dateyama} the \emph{weak specification property} (Dateyama calls it ``almost weak specification''). Dateyama's notion is a variant of a specification property used by Marcus in \cite{Marcus}   (Marcus did not coined a name for the property he stated in \cite[Lemma 2.1]{Marcus}, we think that \emph{periodic weak specification} is an appropriate name).

A  dynamical system $(X,T)$ has the \emph{weak specification property} if for every $\eps>0$ there is a function $M_\eps \colon \N \to \N$
with $\lim_{n\to \infty} M_\eps(n)/n=0$ such that for any specification $\{T^{[a_i,b_i]}(x_i)\}_{i=1}^k$ with
$a_{i} - b_{i-1}\geq M_\eps(b_i-a_i)$ for $i= 2,\dots,k$, we can find a point $x\in X$ such
that for each $i=1,\ldots, k$ and $a_i \leq j \leq b_i$, we have
\begin{equation}
\rho(T^j(x),T^j(y_i))\leq \eps \label{cond:psp1}.
\end{equation}
We say that $M_\eps$ is an $\eps$-gap function for $T$.

Marcus proved in \cite{Marcus} that the periodic point measures are weakly dense in the space of invariant measures for ergodic toral automorphisms. Dateyama
established that for an automorphism $T$ of a compact metric abelian group the weak specification property is equivalent to  ergodicity of $T$ with respect to Haar measure \cite{Dateyama2}. 

Another interesting notion is the \emph{almost specification property}. Pfister and Sullivan introduced the \emph{g-almost
product property} in \cite{PS1}. Thompson \cite{T} modified this notion slightly and renamed it the \emph{almost
specification property}. The primary examples of dynamical systems with the almost specification property are $\beta$-shifts (see \cite{CT,PS1}).
We follow Thompson's approach, hence the almost specification property presented below is
a priori weaker (less restrictive) than the notion introduced by Pfister and Sullivan.

We say that $g \colon \Zp\times(0,\eps_0)\to\N$, where $\eps_0>0$ is a \emph{mistake function} if for all $\eps<\eps_0$  and all
$n \in\Zp$  we have $g(n, \eps) \le g(n + 1, \eps)$ and
\[
\lim_{n\to \infty}
\frac{g(n, \eps)}{n}= 0.
\]
Given a mistake function $g$ we define a function $k_g\colon (0,\infty) \to \N$ by
declaring $k_g(\eps)$ to be the smallest $n\in\N$ such that $g(m,\eps)<m\eps$ for all $m\ge n$.

Given a mistake function $g$, $0<\eps<\eps_0$ and $n\ge k_g(\eps)$ 
we define the set
\[
I(g; n, \eps) := \{\Lambda\subset \set{0,1,\ldots, n - 1} : \#\Lambda \le g(n,\eps)\}.
\]

We say that a point $y\in X$ $(g;\eps,n)$-traces an orbit segment $T^{[a,b]}(x)$ over $[c,d]$ 
if $n= b-a+1=d-c+1$, $k_g(\eps)\ge n$ and for some $\Lambda\in I(g;n,\eps)$ we have
$\rho^T_\Lambda(T^a(x),T^c(y))\le\eps$. By $B_n(g;x,\eps)$ we denote the set of all points which
$(g;\eps,n)$-traces an orbit segment $T^{[0,n)}(x)$ over $[0,n)$. Note that $B_n(g;x,\eps)$ is always closed and nonempty.

A dynamical system $(X,T)$ has the \emph{almost specification
property} if there exists a mistake function $g$ such that
for any $m\geq 1$, any $\eps_1,\ldots,\eps_m > 0$,
and any specification $\{T^{[a_j,b_j]}(x_j)\}_{j=1}^m$ with $b_j-a_j+1\ge k_g(\eps_j)$ for every $j=1,\ldots,m$
we can find a point $z\in X$ which $(g;b_j-a_j+1,\eps_j)$-traces the orbit segment
$T^{[a_j,b_j]}(x_j)$ for every $j=1,\ldots,m$.

In other words,
the appropriate part of the orbit of $z$ $\eps_j$-traces with at most $g(b_j-a_j+1,\eps_j)$ mistakes the orbit of $x_j$ over $[a_j,b_j]$.


Intuitively, it should come as no surprise that almost specification does not imply the weak specification.
But we did not expect at first that the converse implication is also false.

\section{Symbolic dynamics}

We assume that the reader is familiar with the basic notions of symbolic dynamics. An excellent introduction to this theory is the book of Lind and Marcus.
We follow the notation and terminology presented there as close as possible.

Let $\Lambda$ be a finite set (an \emph{alphabet}) of \emph{symbols}. The \emph{full shift} over $\Lambda$ is the set $\Lambda^\N$ of all infinite sequences of symbols. We equip $\Lambda$ with the discrete topology and $\Lambda^\N$ with the product (Tikhonov) topology. By $\sigma$ we denote the shift operator given by $\sigma(x)_i=x_{i+1}$. A \emph{shift space} over $\Lambda$ is a closed and $\sigma$-invariant subset of $\Lambda^\N$. A \emph{block} (a \emph{word}) over $\Lambda$ is any finite sequence of symbols. The \emph{length of a block $u$}, denoted $|u|$,  is the number of symbols it contains. An \emph{$n$-block} stands for a block of length $n$. An \emph{empty block} is the unique block with no symbols and length zero. The set of all blocks over $\Lambda$ (including empty block) is denoted by $\Lambda^*$. 

We say that a block $w=w_1\ldots w_n\in \Lambda^*$ \emph{occurs in $x=(x_i)_{i=1}^\infty\in\Lambda^\N$} and $x$ \emph{contains} $w$ if $w_j=x_{i+j-1}$ for some $i\in\N$ and all $1 \le j\le n$. The empty block occurs in every point of $\Lambda^\N$. Similarly, given an $n$-block $w=w_1\ldots w_n \in\Lambda^*$, a \emph{subblock} of $w$ is any block of the form $v=w_iw_{i+1}\ldots w_j\in\Lambda^*$ for each $1\le i\le j\le n$. A \emph{language} of a shift space $X\subset\Lambda^\N$ is the set $\Bl(X)$ of blocks over $\Lambda$ which occur in some $x\in X$. The language of the shift space determines it: two shift spaces are equal if and only if they have the same language \cite[Proposition 1.3.4]{LM95}. To define a shift space it is enough to specify a set $\mathcal{L}\subset\Lambda^*$ which is \emph{factorial}, meaning that if $u\in\mathcal{L}$ then so does any subblock of $u$, and \emph{prolongable}, meaning that for every block $u$ in $\mathcal{L}$ there is a symbol $a\in \Lambda$ such that the concatenation $ua$ also belongs to $\mathcal{L}$.

It is convenient to adapt definitions of the weak specification and almost specification property to symbolic dynamics.

We say a non-decreasing function $\theta \colon \Zp\to\Zp$ is a mistake function if $\theta(n) \le n$ for all $n$ and $\theta(n)/n \to 0$.
A shift space has the \emph{almost specification property} if there exists a mistake function $\theta$ such that for every $n\in\N$ and $w_1,\ldots,w_n\in\lang(X)$, there exist words $v_1,\ldots,v_n\in\lang(X)$ with $|v_i| = |w_i|$ such that $v_1v_2\ldots v_n\in\lang(X)$ and each
$v_i$ differs from $w_i$ in at most $\theta(|v_i|)$ places.

We say that a shift space $X$ has the \emph{weak specification property} if for every $n\in \N$ there exists $t=t(n)\in\N$ such that $t(n)/n\to 0$ as $n\to\infty$ and for any words $u,w \in \lang(X)$ there exists a word $v\in \lang(X)$ such that $x = uvw \in\lang(X)$ and $|v| = t$.

Given an infinite collection of words $\mathcal{L}$ over an alphabet $\Lambda$, the entropy of $\mathcal{L}$ is $h(\mathcal{L}) = \limsup_{n\to \infty}
\frac{1}{n} \log \#(\mathcal{L} \cap \Lambda^n).$

\section{Almost specification and measures of maximal entropy}

In this section we construct a family of shift spaces which contains
\begin{enumerate}
               \item A shift space with almost specification and multiple measures of maximal entropy.
               \item A shift space with weak specification and multiple measures of maximal entropy.
               \item A shift space with almost specification but without weak specification.
               \item Shift spaces $X$ and $Y$  satisfying
               \begin{enumerate}
               \item $Y$ is a factor of $X$,
                 \item their languages possess the Climenhaga-Thompson decomposition
               $\lang(X)=\CpX\cdot\G_X\cdot\CsX$ and $\lang(Y)=\CpY\cdot\G_Y\cdot\CsY$,
                 \item $h(\G_X)>h(\CpX\cup\CsX)$ and $h(\G_Y) < h(\CpY\cup\CsY)$,
                 \item $X$ is intrinsically ergodic, while $Y$ is not.
               \end{enumerate}
\end{enumerate}
As we want to kill two (actually, more than two) birds with one stone, therefore our construction is a little bit more involved than needed for each of our goals separately. We use the flexibility to shorten the total length of the paper.

\subsection{Construction of $\XR$} Our aim is to construct a shift space, denoted by $\XR$, for a given integers $p,q\in \N$ with $q\ge 2$ and a family of sets $\RR=\{R_n\}_{n=1}^\infty$. Needless to say, $\XR$ and its properties rely on these parameters. Our notation will not reflect the dependence
on $p$ and $q$.

\subsubsection{Parameters} Fix integers $p,q\in\N$, $p,q\ge 2$. Let $\RR=\{R_n\}_{n=1}^\infty$ be an increasing sequence of nonempty finite subsets of $\N$ such that
$\max R_n\le n$ for each $n\in\N$. That is,
\[
\{1\}=R_1\subset R_2\subset R_3\subset \ldots \text{ and }R_n\subset\{1,\ldots,n\}.
\]
One may think that the elements of $R_n$ are the special positions in a word of length $n$.

We define a nondecreasing function $r\colon\Zp\to\Zp$ by $r(0)=0$ and
\[
r(n)=|R_n| \quad\text{for }n\in\N.
\]
The function $r(n)$ can be interpreted as the count of the number of special positions in a word of length $n$.
We have $r(n-1)\le r(n)$ for each $n\in\N$.

We say that the set $R_n$ has \emph{a gap of length $k$} if $\{1,\ldots,n\}\setminus R_n$ contains $k$ consecutive integers. 
By $N_k$ we denote the smallest $n$ such that $R_n$ has a gap of length $k$ (if such an $n$ exists, otherwise we set $N_k=\infty$). We say that the set $\RR$ has \emph{large gaps} if $N_k<\infty$ for all $k\in\N$. It is easy to see that the monotonicity condition ($R_n\subset R_{n+1}$ for $n\in\N$) implies that some set $R_m$ in $\RR$ has a gap of length $k$ if and only if for each $k$ we have $k\ge n-\max R_{n}$ for infinitely many $n\in\N$. Note that $r(n)/n\to 0$ as $n\to\infty$ implies that $\{R_n\}_{n=1}^\infty$ has large gaps.

\subsubsection{Definition of $\XR$} Let $\alf=\{1,\ldots,p\}\times\{0,1,\ldots,q-1\}\cup\{(0,0)\}$. We will depict $(a,b)\in \alf$ as $\tword{a}{b}$ and regard $a\in\{0,1,\ldots,p\}$ as the \emph{color} of the whole symbol. We call the symbol $\tword{0}{0}$ the \emph{marker} symbol and denote the block of length one containing the marker by $\OO$.
We say that a word $\tword{a_1\ldots a_n}{b_1\ldots b_n}\in\alf^*$ is \emph{monochromatic} or \emph{of color} $a\in \{0,1,\ldots,p\}$ if $a=a_1=\ldots=a_n$, and \emph{polychromatic} otherwise.
We use capital letters to denote blocks (words) over $\alf$ to remind that they can be identified with matrices.
We say that a subblock
\[
V=\tword{a_i\ldots a_j}{b_i\ldots b_j}{}
\]
is a \emph{maximal monochromatic subword} of
\[
W=\tword{a_1\ldots a_n}{b_1\ldots b_n}\in\alf^*
\]
if $a_i=a_{i+1}=\ldots=a_j$ and $1\le i \le j\le n$ are such that $i=1$ or $a_{i-1}\neq a_i$, and $j=n$ or $a_j\neq a_{j+1}$.
Furthermore, if $i=1$ ($j=n$), then we say that $V$ is a \emph{maximal monochromatic prefix} (\emph{suffix}, respectively) of $W$.

We define the language of a shift by specifying which words are allowed.  We declare all monochromatic words of color $a\in\{1,\ldots,p\}$ to be allowed.
The blocks $\OO^k$ for $k=1,2,\ldots$ are the only allowed monochromatic blocks of color $0$. We denote the set of all monochromatic allowed words by $\Mon$.
We put some constraints on the polychromatic blocks. 
We say that a monochromatic block
\[
W=\tword{a_1\ldots a_n}{b_1\ldots b_n}\in\alf^*
\]
in color $a\in\{1,\ldots,p\}$ is \emph{restricted} if $b_{j}=0$ for each $j\in R_k$. In other words, some symbols in the second row of a restricted block are set to $0$. To describe where these $0$'s must appear we use the sequence $\RR=\{R_n\}_{n=1}^\infty$.
We write $\Rst$ for the set of all restricted words. We agree that empty block is both, restricted and monochromatic, block. We say that a block $W$ is \emph{free} if $W=\OO V$, where $V$ is a (possibly empty) restricted block.
Let $\F$ be the set of all free blocks. We call any member of a set $\G=\F^*$ of all finite concatenations of free blocks \emph{a good word}. A word is \emph{allowed} if it can be written as a concatenation of a monochromatic word and good word, that is, $W$ is allowed if there exist $U\in\Mon$ and $V_1,\ldots,V_k\in\F$ such that $W=UV_1\ldots V_k$.

It is easy to see that the set of allowed words is a language of a shift space, which we denote by $\XR$.

\subsection{Dynamics of $\XR$}

We recall that a shift space $X$ is \emph{synchronized} if there exists a \emph{synchronizing word} for $X$, that is, there is a word $v\in\Bl(X)$ such that $uv,vw\in\Bl(X)$, implies $uvw\in\Bl(X)$.

\begin{lemma}
The shift space $\XR$ is synchronized.
\end{lemma}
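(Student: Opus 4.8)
The plan is to exhibit an explicit synchronizing word for $\XR$ and verify the synchronization property directly from the definition of the language. The natural candidate is the marker symbol $\OO$ itself (the block of length one consisting of $\tword{0}{0}$). The intuition is that $\OO$ is the unique way a free block begins, and every allowed word decomposes as a monochromatic prefix followed by a concatenation of free blocks; so seeing the marker tells us exactly where one free block ends and the next begins, erasing all memory of what came before.

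First I would recall the structure of $\lang(\XR)$: a word $W$ is allowed iff $W=UV_1\cdots V_k$ with $U\in\Mon$ and each $V_i\in\F$, where a free block has the form $\OO V$ with $V$ restricted. Now suppose $u\,\OO,\ \OO\,w\in\lang(\XR)$; I must show $u\,\OO\,w\in\lang(\XR)$. The key observation is that because $\OO w$ begins with the marker, the block $\OO w$ is itself the start of a free-block decomposition: we can write $\OO w = W_1 W_2\cdots W_\ell$ where $W_1=\OO V$ is free (the marker forces the monochromatic prefix to be empty or to be absorbed into the first free block) and $W_2,\dots,W_\ell\in\F$. Dually, $u\,\OO$ is an allowed word ending in the marker, so it decomposes as $u\,\OO = U' V_1'\cdots V_m'$ with $U'\in\Mon$ and $V_i'\in\F$, and the terminal $\OO$ is the first symbol of what would be the next free block; equivalently $u\OO$ can be viewed as a monochromatic word followed by free blocks, with the final $\OO$ legitimately starting a free block $\OO V$ where $V$ is the empty restricted word. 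Concatenating, $u\,\OO\,w = U'\,V_1'\cdots V_m'\,(\OO V_1'')\cdots$, a monochromatic word followed by free blocks, hence allowed.

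The main step I expect to require care is the bookkeeping of the restricted-word condition at the seam. A free block $\OO V$ requires $V$ to be restricted, meaning the entries indexed by $R_k$ in the second row must vanish; since the restriction is imposed \emph{per free block} relative to its own length, and since the decompositions of $u\OO$ and $\OO w$ are independent, I must check that gluing them does not create a block straddling the junction that violates a restriction. This is where the marker does its work: the marker $\OO$ at the junction guarantees that no maximal monochromatic subword crosses the seam (the color $0$ marker terminates whatever monochromatic run preceded it and begins a fresh free block), so each free block's internal restriction condition is evaluated exactly as it was in the original two words. Thus no new constraint is introduced.

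The hard part will be formulating precisely why the marker forces a \emph{clean} break in the free-block decomposition — i.e., why a word beginning with $\OO$ cannot have its initial marker absorbed into the monochromatic prefix $U$ in a way that interferes. I would handle this by noting that the only allowed monochromatic blocks of color $0$ are the powers $\OO^k$, so a maximal monochromatic run of markers is always a clean prefix of a free block, and the decomposition into free blocks is essentially canonical once we mark the positions of the color-$0$ symbols. Once that canonical marker-delimited decomposition is established, the synchronization property for $v=\OO$ follows immediately, completing the proof that $\XR$ is synchronized.
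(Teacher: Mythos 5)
Your proposal is correct and takes exactly the paper's approach: the paper's proof consists of the single observation that $\OO$ is a synchronizing word for $\XR$, which is precisely the word you chose, and your case analysis (the terminal marker of $u\OO$ can always be viewed as a free block with empty restricted part, while $\OO w$ decomposes purely into free blocks) is a correct fleshing-out of the verification the paper leaves to the reader.
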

\begin{proof} It is easy to see that $\OO$ is a synchronizing word for $\XR$.
\end{proof}

\begin{proposition}
If $\RR$ has large gaps, then the shift space $\XR$ is topologically mixing with dense periodic points.
\end{proposition}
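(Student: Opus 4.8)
The plan is to verify both conclusions by purely combinatorial manipulation of the language, using the marker $\OO$ as a synchronizing word and the large gaps to repair the ``heads'' of words. First I would record two elementary closure properties of the set of allowed words. (i) If $x$ is allowed then so is $x\OO$, since $\OO$ is itself a free block (the marker followed by the empty restricted block), so it may be appended to the good part of $x$. (ii) If $x$ is allowed and $\OO y$ is allowed, then $x\OO y$ is allowed: writing $x=UV_1\cdots V_k$ in its monochromatic-plus-good form $\Mon\cdot\G$ and observing that \emph{any} allowed word beginning with $\OO$ is already a good word (its monochromatic head $U$ can only be a power of $\OO$, i.e.\ a string of one-symbol free blocks, so the word lies in $\F^{*}=\G$), the concatenation $x\OO y$ is again of the form $\Mon\cdot\G$. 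In particular, prepending markers to a word that starts with $\OO$ preserves allowedness. This is the concrete form of the synchronizing property of $\OO$ that I will use.

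With (i) and (ii) in hand, everything reduces to the following head-repair statement, which is the \emph{only} place the large-gap hypothesis enters. \emph{Claim.} For every allowed $w$ there is a word $\pi$, of some length $L=L(w)$, such that $\OO\pi w$ is allowed. The difficulty is that $\OO w$ itself need not be allowed: if $w$ begins with a maximal monochromatic block $P$ of color $a\in\set{1,\dots,p}$ and some entry of $P$ in a special position is nonzero, then $\OO P$ is not free. I would repair this by prepending a long color-$a$ block so that the offending positions of $P$ fall into a gap of the relevant $R_m$. Concretely, set $\ell=|P|$; by the large-gap hypothesis pick $n$ so that $R_n$ has a gap of length $\ell$, say $\set{s+1,\dots,s+\ell}\cap R_n=\emptyset$, and put $m=s+\ell$. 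By monotonicity $R_m\subseteq R_n$, so $\set{s+1,\dots,m}\cap R_m=\emptyset$, whence $\max R_m\le s=m-\ell$; that is, the terminal $\ell$ positions of $\set{1,\dots,m}$ avoid $R_m$. Let $L=m-\ell$ (note $L\ge\max R_m\ge 1$, as $R_1=\set{1}\subseteq R_m$) and let $C$ be the color-$a$ block of length $L$ with all second-row entries $0$. Then $CP$ is a monochromatic color-$a$ block of length $m$ whose zeros cover all of $R_m$, so $CP$ is restricted and $\OO(CP)$ is free. Since $P$ is a maximal monochromatic prefix, the remainder $w''$ of $w$ after $P$ is either empty or begins with $\OO$; in the first case $\OO Cw=\OO(CP)$ is free, and in the second $\OO Cw=\OO(CP)\,w''$ is allowed by (ii). Thus $\pi=C$ works.

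Granting the Claim, topological mixing follows at once. Given allowed $u,w$, the Claim gives $\pi$ with $\OO\pi w$ allowed; prepending markers keeps $\OO^{j}\pi w$ allowed for every $j\ge 1$, and then $u\OO^{j}\pi w$ is allowed by (ii) (take $x=u$ and $\OO y=\OO^{j}\pi w$). As $j$ ranges over $\N$, the connecting word $v=\OO^{j}\pi$ realizes every length $\ge L+1$, which is exactly the mixing condition for the pair $u,w$.

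For density of periodic points I would use that any allowed word $z$ beginning with $\OO$ satisfies $z^{\infty}\in\XR$: by (ii) and induction $z^{k}$ is allowed for all $k$ (splicing successive copies at their initial markers, with $x=z^{k-1}$ and $\OO y=z$), so every subword of $z^{\infty}$ is allowed and hence $z^{\infty}\in\XR$. Applying this to $z=\OO\pi w$, which is allowed by the Claim and begins with $\OO$, yields a periodic point of $\XR$ containing the arbitrary word $w$, so periodic points are dense. The main obstacle is the Claim, and within it the passage from an arbitrary gap of $R_n$ to a \emph{terminal} gap of $R_m$ via monotonicity: this is precisely what allows the unconstrained tail of a restricted block to absorb the head of $w$, and it is where large gaps are indispensable.
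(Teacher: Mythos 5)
Your proof is correct, and it takes a more self-contained route than the paper's. The head-repair step is the same idea in both: the paper's proof begins by observing that large gaps allow one to realize any monochromatic word of color $a\in\{1,\ldots,p\}$ and length at most $k$ as a suffix of some restricted word of length $N_k$, which is precisely your Claim (your monotonicity argument, turning an arbitrary gap of $R_n$ into a \emph{terminal} gap of $R_m$, is the justification the paper leaves implicit). After that the two arguments diverge. The paper deduces from this fact only \emph{weak} mixing --- each monochromatic word is a suffix of free words whose lengths form a thick set --- and then finishes by quoting general theory: $\XR$ is synchronized (the preceding lemma), and every weakly mixing synchronized shift is mixing with dense periodic points, citing \cite[Prop. 4.8]{OM}. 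You avoid any citation: padding with $\OO^{j}$ turns the single connecting word $\pi$ into connecting words of \emph{every} length $\ge L+1$, which yields full mixing directly, and periodizing the good word $\OO\pi w$ produces periodic points explicitly via $z^{\infty}$. What the paper's route buys is brevity and a reusable structural fact (synchronization, which it also exploits elsewhere, e.g. for Thomsen's theory); what yours buys is a proof readable without outside references, making transparent that both conclusions flow from the marker $\OO$ alone. Two loose ends you should tie up, neither a real gap: in the Claim, dispose of the trivial case where $w$ already begins with $\OO$ (then $w$ is itself a good word and $\pi$ may be taken empty); and for density of periodic points, the point $z^{\infty}$ with $z=\OO\pi w$ carries $w$ at position $|\OO\pi|+1$, so the periodic point lying in the cylinder of $w$ is $\sigma^{|\OO\pi|}(z^{\infty})$, which is still periodic.
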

\begin{proof} 
The existence of large gaps implies that for each $k\in\N$ we can pick $N_k\in\N$ such that any monochromatic word of color $a\in\{1,\ldots,p\}$ and length at most $k$ is a suffix of some restricted word of length $N_k$. It follows that for each monochromatic word $V$ there exists a thick set $T\subset\N$ such that $V$ is a suffix of some free word of length $t$ for every $t\in T$. This easily implies that $\XR$ is weakly mixing. We conclude the proof by noting that every weakly mixing synchronized shift is mixing (e.g. see \cite[Prop. 4.8]{OM}) and has dense set of periodic points.
\end{proof}

\begin{lemma}\label{lem:almost_gdecreasing}
If $r(n)/n\to 0$ as $n\to\infty$, then the shift space $\XR$ has the almost specification property.
\end{lemma}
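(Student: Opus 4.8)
The plan is to exhibit a single mistake function $\theta$ and, given arbitrary words $w_1,\ldots,w_n\in\lang(\XR)$, to modify each $w_i$ into a \emph{good word} $v_i$ (a member of $\G=\F^*$) of the same length, changing at most $\theta(|w_i|)$ symbols. Since $\G=\F^*$ is closed under concatenation and every good word is allowed (take the monochromatic prefix to be empty in the decomposition $UV_1\cdots V_k$), the concatenation $v_1v_2\cdots v_n$ is again a good word and hence lies in $\lang(\XR)$. This reduces the statement to a purely local claim: every allowed word $w$ of length $m$ can be turned into a good word of length $m$ using at most $1+r(m-1)$ changes.

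To prove the local claim, I would exploit the structure of allowed words: write $w=U V_1\cdots V_k$ with $U\in\Mon$ monochromatic and each $V_j\in\F$ free. The blocks $V_1,\ldots,V_k$ are already free (each begins with the marker $\OO$ and is followed by a restricted word whose forced zeros sit at the positions dictated by that word's own length), so they require no changes; only the monochromatic prefix $U$ obstructs goodness. If $U$ is empty or of color $0$ (so $U=\OO^{s}$), then $w$ is already good, because $\OO^{s}$ is a concatenation of the one-symbol free blocks $\OO$, and no change is needed. The remaining case is $U$ monochromatic of some color $a\in\{1,\ldots,p\}$ and length $j\ge 1$. Here I overwrite the first symbol of $U$ by the marker $\OO$ (one change) and then force the second-row entries at the positions belonging to $R_{j-1}$ among the remaining $j-1$ symbols to be $0$ (at most $r(j-1)$ further changes). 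The modified prefix is then a free block $\OO V'$ of length $j$, so the entire word becomes good.

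Counting changes, the conversion costs at most $1+r(j-1)\le 1+r(m-1)$ symbols, since $j\le m$ and $r$ is nondecreasing. Setting $\theta(0)=0$ and $\theta(m)=1+r(m-1)$ for $m\ge 1$ gives a nondecreasing function with $\theta(m)\le 1+(m-1)=m$ (because $r(m-1)\le m-1$) and, using the hypothesis $r(n)/n\to 0$, with $\theta(m)/m\to 0$; thus $\theta$ is a legitimate mistake function and it dominates the number of changes made in each word. This yields the almost specification property.

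I expect no serious obstacle: the only point requiring care is the bookkeeping that guarantees $\theta(m)\le m$ for \emph{all} $m$ (not merely for large $m$), which is why I bound the cost by $1+r(m-1)$ rather than the coarser $1+r(m)$ — the former is automatically at most $m$. A secondary point worth stating explicitly is that the free blocks $V_1,\ldots,V_k$ genuinely need no correction, i.e.\ that the positions of the forced zeros in a restricted word depend only on that word's own length and are unaffected by concatenation; this is immediate from the definitions but should be recorded.
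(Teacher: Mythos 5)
Your proof is correct and takes essentially the same route as the paper's: the paper also uses the mistake function $\theta(m)=r(m-1)+1$ and the same local surgery (overwrite the leading symbol of the offending monochromatic prefix with the marker $\OO$ and put zeros on the at most $r(m-1)$ restricted positions of the second row), reducing everything to the fact that good words concatenate freely. If anything, your write-up is slightly more careful than the paper's, which calls the modified word ``free'' where (as you correctly note) it is in general only \emph{good}, and which does not explicitly record the checks that $\theta(m)\le m$ and that the blocks $V_1,\ldots,V_k$ need no correction.
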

\begin{proof} 
We claim that the function $\theta$ given by $\theta(n)=r(n-1)+1$ is a mistake function for $\XR$.
It is enough to show that given any two $\XR$-allowed words
\[
U=\tword{a_1\ldots a_{k}}{b_1\ldots b_{k}}\quad\text{and}\quad V=\tword{c_1\ldots c_{l}}{d_1\ldots d_{l}}
\]
we can change $V$ in at most $\theta(l)=r(l-1)+1$ positions to find a word $V'$ which is free. Then the concatenation $UV'$ is an $\XR$-allowed word.
To define $V'$ we change $\tword{c_1}{d_1}$ to $\OO$ and modify the maximal monochromatic prefix of $\tword{c_2\ldots c_{l}}{d_2\ldots d_{l}}$ by putting at most $r(l-1)$ zeros on the restricted positions in the second row. Such $V'$ is clearly free and $UV'$ is then an $\XR$-allowed word.
\end{proof}

\begin{lemma}\label{lem:large_g_weak}
The shift space $\XR$ has the weak specification property if and only if $\{R_n\}_{n=1}^\infty$ has large gaps and $k/N_k\to 1$ as $k\to\infty$.
\end{lemma}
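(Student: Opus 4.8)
The plan is to reduce the weak specification property to gluing two words $u,w\in\lang(\XR)$ by a connecting word $v$ of a prescribed length, and then to isolate the single obstruction to such a gluing. The structural fact I would use is that in any allowed word every maximal monochromatic block of color in $\{1,\dots,p\}$, apart from the initial prefix, is the restricted part of a single free block; in particular, color changes force a marker. Hence, writing $w=U^wV_1^w\cdots V_m^w$ with $U^w\in\Mon$ its maximal monochromatic prefix (color $a$, length $s$) and $V_i^w\in\F$, the only hard part of $w$ to place is $U^w$: once $u$ is nonempty, $U^w$ no longer sits at the start of $uvw$, so it must occur inside the free-block structure, i.e. as a suffix of the restricted part of some free block. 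If the second row of $U^w$ carries no zeros, $U^w$ can be such a suffix of a restricted word of length $N$ only when $R_N\cap\{N-s+1,\dots,N\}=\emptyset$, that is, when $R_N$ has a gap of length $s$ ending at position $N$ (an \emph{end-gap}). The first thing I would prove is that the least $N$ admitting an end-gap of length $s$ equals $N_s$: one inclusion is immediate since an end-gap is a gap, and for the other, if $R_n$ has a gap at $\{i+1,\dots,i+s\}$ with $i+s\le n$, then monotonicity of $\RR$ gives $R_{i+s}\subseteq R_n\cap\{1,\dots,i+s\}\subseteq\{1,\dots,i\}$, so $R_{i+s}$ already has an end-gap of length $s$; taking $n=N_s$ finishes it.

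For sufficiency I would set $t(n)=\max_{1\le s\le n}(N_s-s+1)$. Given $u,w$ with $|w|=n$ and $U^w$ of color $a\in\{1,\dots,p\}$ and length $s$, I would use the end-gap of length $s$ at length $N_s$ to build a color-$a$ restricted word $BU^w$ of length $N_s$, choosing the second row of $B$ (of length $N_s-s$) to vanish on $R_{N_s}$; prepending a marker and a suitable number $j\ge0$ of padding markers yields $v=\OO^{j}\OO B$ with $|v|=t(n)$, and
\[
uvw=u\,\OO^{j}\,\OO B\,U^w\,V_1^w\cdots V_m^w
\]
parses as a monochromatic prefix followed by free blocks, hence is allowed. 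The cases where $U^w$ is empty or consists of markers are handled trivially by $v=\OO^{t(n)}$.

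For necessity I would test the property on $w$ equal to the color-$1$ word of length $k$ whose second row is identically $1$ (so $w\in\lang(\XR)$ and $U^w=w$ has no zeros) together with $u=\tword{2}{0}$. In any allowed $uvw$ the word $w$ lies in the free-block region, and its maximal color-$1$ suffix fills the restricted part $C$ of the last free block $\OO C$; since the last $k$ symbols of $C$ are nonzero, $C$ has an end-gap of length $k$, forcing $|C|\ge N_k$. As $u$ has color $2$, the marker of $\OO C$ and the color-$1$ portion of $C$ preceding $w$ all lie in $v$, so $|v|\ge N_k-k+1$. Since weak specification provides a gluing word of length exactly $t(k)$, this gives $N_k\le t(k)+k-1<\infty$ (hence large gaps) and $(N_k-k)/k\le t(k)/k\to0$, i.e. $k/N_k\to1$.

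The main obstacle I anticipate is the end-gap lemma together with its quantitative use: the quantity $N_k$ is defined through gaps placed anywhere, whereas the dynamics forces a gap precisely at the end of a block, and the whole characterization rests on these two minimal lengths coinciding. The second delicate point, needed to certify that $t(n)$ is a legitimate gap function, is upgrading the pointwise estimate $N_s-s=o(s)$ (equivalent to $k/N_k\to1$) to the uniform bound $\max_{1\le s\le n}(N_s-s)=o(n)$: given $\delta>0$, choose $S$ with $N_s-s<\delta s$ for $s\ge S$; then for large $n$ the terms with $s\ge S$ are $\le\delta n$ while those with $s<S$ are bounded by $N_{S-1}$, so $\limsup_n t(n)/n\le\delta$, and large gaps guarantees each $N_s<\infty$ so that the maximum is finite.
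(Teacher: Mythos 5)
Your proof is correct and follows essentially the same route as the paper's: for sufficiency, reduce the gluing to the maximal monochromatic prefix of $w$ and realize that prefix as the tail of a restricted word of length $N_s$ reached through an end-gap; for necessity, test the property on a monochromatic, non-restricted word that is forced into the free-block region, yielding $N_k\le t(k)+k$. The differences are only matters of detail: your explicit end-gap lemma and the uniform bound $\max_{1\le s\le n}(N_s-s)=o(n)$ make rigorous two points the paper's terser argument leaves implicit (its remark on monotonicity after the definition of large gaps plays the role of your end-gap lemma, and its ``without loss of generality'' hides the need to take the maximum over $s\le n$ when defining the gap function).
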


\begin{proof}
Assume that $\{R_n\}_{n=1}^\infty$ has large gaps and $k/N_k\to 1$ as $k\to\infty$.
Take any two allowed words
\[
U=\tword{a_1\ldots a_{j}}{b_1\ldots b_{j}}\quad\text{and}\quad W=\tword{c_1\ldots c_{k}}{d_1\ldots d_{k}}.
\]
Without loss of generality we may assume that $W$ is monochromatic and in color $a\in\{1,\ldots,p\}$.
We can find an allowed word $V$ of length $N_k-k$ such that the concatenation $VW$ is a restricted word. Therefore $U\OO^k VW$ is  an allowed word for every $k\geq 1$. Now $(N_k-k)/N_k\to 0$ as $k\to\infty$ implies that $\XR$ has the weak specification property.

Now assume that $\XR$ has the weak specification property. Pick a color $a\neq 0$ and let
$U=\OO$ and $W_k=\tword{a}{1}^k$ for $k\in\N$. Then $W_k$ is a monochromatic, but not restricted word of length $k$. Weak specification implies that for each $k\in\N$ there is a word $V$ such that $UVW_k$ is an allowed word. Let $V_k$ be the shortest such word. Note that $|V_k|\ge 1$. We know that $|V_k|/k\to 0$ as $k\to\infty$. By the definition of $\XR$ the maximal monochromatic suffix of $V_kW_k$ has to be a restricted word. Let $j(k)$ be its length. It is easy to see that $N_k\le j(k)$. We also have $k<j(k)\le |V_k|+k$. It is now clear that $j(k)-\max R_{j(k)}\ge k$ and $k/j(k)\to 1$ as $k\to\infty$, which implies $k/N_k\to 1$ as $k\to\infty$, and completes the proof.
\end{proof}

\subsection{Entropy of $\XR$}
In this section we collect some auxiliary estimates for entropy of $\XR$.
\begin{lemma}\label{lem:counting}
Let $\F_n$ and $\G_n$ denote the number of $n$-blocks in $\F$ and $\G$, respectively. Then
\begin{enumerate}
  \item \label{restricted} $\F_0=\F_1=1$ and $\F_n=p\cdot q^{(n-1)-r(n-1)}$ for all $n>1$;
  \item \label{good} $\G_0=\G_1=1$ and
  \begin{equation}\label{g-rec}
  \G_n=\sum_{i=1}^{n} \F_i\G_{n-i}=\G_{n-1}+\sum_{j=1}^{n-1} pq^{j-r(j)}\cdot\G_{n-1-j}\text{ for }n>1.
  \end{equation}
\end{enumerate}
\end{lemma}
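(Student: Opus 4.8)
The plan is to count restricted words, deduce the count of free words, and then set up a recursion for good words via a unique factorization. First I would observe that a restricted word of length $m\ge 1$ is determined by a color $a\in\{1,\ldots,p\}$ (giving $p$ choices) together with its second-row entries, which are forced to be $0$ on the $r(m)=|R_m|$ positions of $R_m$ and range freely over $\{0,\ldots,q-1\}$ on the remaining $m-r(m)$ positions; hence there are exactly $p\cdot q^{m-r(m)}$ of them, with the empty block as the unique restricted word of length $0$. Since a free word of length $n$ is by definition $\OO V$ with $V$ restricted of length $n-1$, for $n=1$ this forces $V$ to be empty so $\F_1=1$, and for $n>1$ the count just made gives $\F_n=p\cdot q^{(n-1)-r(n-1)}$. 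Taking $\F_0=1$ as a convention then establishes (1).

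For (2) the essential point --- and the one place requiring real care --- is that every nonempty good word factors uniquely as a concatenation of free blocks. This rests entirely on the marker symbol: a restricted block has color $a\neq 0$ and so never contains $\OO$, whence in any concatenation $V_1\cdots V_k$ of free blocks the marker occurs precisely at the initial position of each $V_i$. The positions of the markers therefore recover the factorization, and in particular every nonempty good word begins with $\OO$. With this unambiguity in hand, I would split a good word of length $n$ at the end of its first free block: the first block ranges over the $\F_i$ free words of length $i$ for $1\le i\le n$, independently of the remaining good word of length $n-i$, counted by $\G_{n-i}$. Since the factorization is unique there is no overcounting, and one obtains $\G_n=\sum_{i=1}^n\F_i\G_{n-i}$.

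Finally I would rewrite this sum in the stated form. Splitting off the $i=1$ term yields $\F_1\G_{n-1}=\G_{n-1}$, and substituting $\F_i=pq^{(i-1)-r(i-1)}$ for $i\ge 2$ together with the reindexing $j=i-1$ turns the remaining sum into $\sum_{j=1}^{n-1}pq^{j-r(j)}\G_{n-1-j}$, which is exactly \eqref{g-rec}. The base cases $\G_0=\G_1=1$ are immediate, the unique good word of length $1$ being $\OO$ itself. Apart from the uniqueness of the free-block factorization, the whole argument is pure bookkeeping, and I expect essentially all of the subtlety to lie in confirming that the marker cannot appear inside a restricted block, since an ambiguous factorization would destroy the product-sum structure underlying the recursion.
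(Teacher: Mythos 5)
Your proof is correct and follows essentially the same route as the paper's: the paper also counts restricted words directly for part (1) and derives \eqref{g-rec} by splitting a good word at a free-block boundary, the only cosmetic difference being that the paper peels off the \emph{last} free block $V=\OO U$ while you peel off the first. Your explicit verification that the marker $\OO$ cannot occur inside a restricted block (so the factorization into free blocks is unique and there is no overcounting) is a point the paper leaves implicit, and it is exactly the right thing to check.
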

\begin{proof}
The first point is obvious. The equalities $\G_0=\G_1=1$ follow from the definition of $\G$. Let $n\ge 2$ and let $W\in\G_n$ be a concatenation of free words. Then $W$ must end with a free word $V$ of length $j\in\{1,\ldots,n\}$ which has the form $V=\OO U$ for some restricted word $U$. The word $V$ can be chosen in $\F_{j}$ different ways, hence the formula.
\end{proof}

The following inequality
\begin{equation}
1+p\sum_{j=1}^\infty q^{-r(j)}\le q. \label{condition}
\end{equation}
is crucial for the uniqueness of a measure of maximal entropy for $\XR$.
We first note some conditions which should be imposed on $r(n)$ to guarantee that
$\eqref{condition}$ holds for some $p$ and $q$.

\begin{lemma}\label{lem12}
If $r(n)>0$ for every $n$ and
\[
\liminf_{n\to\infty}\frac{r(n)}{\ln n}>0,
\]
then there is $Q\geq 2$ such that the series
\[
\sum_{n=1}^\infty q^{-r(n)}
\]
converges for all integers $q\ge Q$ and its sum tends to $0$ as $q\to\infty$.
\end{lemma}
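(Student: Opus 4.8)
The plan is to convert the logarithmic lower bound on $r$ into a polynomial upper bound on the terms $q^{-r(n)}$ and then compare with a convergent $p$-series. First I would unpack the $\liminf$ hypothesis: there exist a constant $c>0$ and an index $N_0\ge 2$ such that $r(n)\ge c\ln n$ for all $n\ge N_0$. Since $q^{-r(n)}=e^{-r(n)\ln q}$, for every integer $q\ge 2$ and every $n\ge N_0$ this yields the estimate
\[
q^{-r(n)}\le e^{-c\ln q\,\ln n}=n^{-c\ln q}.
\]

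Next I would fix the threshold $Q$. Choosing any integer $Q\ge\max\{2,\ \lceil e^{1/c}\rceil+1\}$ guarantees $c\ln q>1$ for all $q\ge Q$. Writing $s=s(q)=c\ln q>1$, the tail $\sum_{n\ge N_0}q^{-r(n)}$ is then dominated term-by-term by $\sum_{n\ge N_0}n^{-s}$, a convergent $p$-series; together with the finitely many initial terms this proves convergence of $\sum_{n=1}^\infty q^{-r(n)}$ for every integer $q\ge Q$.

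To show the sum tends to $0$ as $q\to\infty$, I would split it at $N_0$. For the head $\sum_{n=1}^{N_0-1}q^{-r(n)}$, the hypothesis $r(n)>0$ (hence $r(n)\ge 1$, as $r$ is integer-valued) gives the crude bound $(N_0-1)q^{-1}$, which vanishes as $q\to\infty$. For the tail I would use the integral comparison
\[
\sum_{n\ge N_0}n^{-s}\le N_0^{-s}+\int_{N_0}^{\infty}x^{-s}\,dx=N_0^{-s}+\frac{N_0^{-(s-1)}}{s-1},
\]
and observe that, since $N_0\ge 2$ and $s=c\ln q\to\infty$ as $q\to\infty$, both summands on the right tend to $0$. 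Adding the two contributions gives $\sum_{n=1}^\infty q^{-r(n)}\to 0$.

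The argument is essentially routine; the only points requiring care are ensuring $N_0\ge 2$ so that the tail bound $N_0^{-s}$ actually decays (the term $n=1$ would otherwise contribute the constant $1$ to every tail), and keeping the tail estimate uniform in $q$ so that the limit may be taken inside. I do not expect any single step to be a genuine obstacle.
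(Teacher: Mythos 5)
Your proof is correct and takes essentially the same route as the paper: both convert the hypothesis into $r(n)\ge c\ln n$ for $n\ge N_0$, bound $q^{-r(n)}\le n^{-c\ln q}$ (the paper writes the exponent as $c/\log_q e$, which equals $c\ln q$), and choose $Q$ so that this exponent exceeds $1$, comparing with a convergent $p$-series. You merely make explicit the ``sum tends to $0$'' step (head bounded via $r(n)\ge 1$, tail via integral comparison with $s=c\ln q\to\infty$), which the paper leaves implicit.
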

\begin{proof}
Observe that there is an integer $N>0$ and $c>0$ such that $r(n)>c \ln n= \frac{c}{\log_q e}\log_q n $ for all $n>N$.
Then
$$
\sum_{n=1}^\infty q^{-r(n)} \le \sum_{n=1}^N q^{-r(n)} + \sum_{n=N+1}^\infty \frac{1}{n^{\frac{c}{\log_q e}}},
$$
hence it is enough to take $Q>2$ so large that $c / \log_Q e>1$.
\end{proof}

By Lemma~\ref{lem12} given any function $r\colon\N\to \N$ such that $r(n)>0$ for every $n$ and
\[
\liminf_{n\to\infty}\frac{r(n)}{\ln n}>0,
\]
for any integer $p\geq 2$  we can find $q>p$ such that the inequality \eqref{condition} holds. Furthermore,
if
\[
\lim_{n\to\infty}\frac{r(n)}{\ln n}=\infty,
\]
then the series from the left hand side of \eqref{condition} converges for all $q\ge 2$.

\begin{lemma}\label{entropy1}
If \eqref{condition} holds, then $\G_n\le q^n$ for every $n\ge 0$.
\end{lemma}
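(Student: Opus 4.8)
The plan is to prove the bound $\G_n\le q^n$ by strong induction on $n$, driving the argument with the recursion \eqref{g-rec} from Lemma~\ref{lem:counting} and closing it with the summability hypothesis \eqref{condition}.

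First I would dispose of the base cases. Since $\G_0=\G_1=1$ and $q\ge 2$, we immediately have $\G_0=1=q^0$ and $\G_1=1\le q=q^1$. For the inductive step, fix $n\ge 2$ and assume $\G_m\le q^m$ for all $m<n$. I would then substitute this hypothesis into the right-hand form of \eqref{g-rec}, namely $\G_n=\G_{n-1}+\sum_{j=1}^{n-1}pq^{j-r(j)}\G_{n-1-j}$, to obtain
\[
\G_n\le q^{n-1}+\sum_{j=1}^{n-1}pq^{j-r(j)}q^{n-1-j}
=q^{n-1}+\sum_{j=1}^{n-1}pq^{n-1-r(j)}.
\]
Factoring out $q^{n-1}$ gives $\G_n\le q^{n-1}\bigl(1+p\sum_{j=1}^{n-1}q^{-r(j)}\bigr)$.

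The concluding step is to pass from the finite sum to the full series. Because every summand $q^{-r(j)}$ is positive, extending the range of summation to infinity only enlarges the bracketed quantity, so
\[
\G_n\le q^{n-1}\Bigl(1+p\sum_{j=1}^{\infty}q^{-r(j)}\Bigr)\le q^{n-1}\cdot q=q^n,
\]
where the last inequality is exactly the hypothesis \eqref{condition}. This completes the induction. I do not anticipate a genuine obstacle here: the computation is a direct substitution, and the only point deserving a word of care is the replacement of the partial sum $\sum_{j=1}^{n-1}$ by the convergent series $\sum_{j=1}^{\infty}$, which is legitimate precisely because the terms are nonnegative and the series converges under \eqref{condition} (as guaranteed for suitable $p,q$ by Lemma~\ref{lem12}). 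It is worth remarking that the same estimate shows $h(\G)=\limsup_n\frac1n\log\G_n\le\log q$, which is the reason \eqref{condition} is the pertinent threshold for controlling the entropy of the good words.
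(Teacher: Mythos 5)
Your proof is correct and follows essentially the same route as the paper's: strong induction on $n$ via the recurrence \eqref{g-rec}, factoring out $q^{n-1}$, extending the nonnegative partial sum to the full series, and closing with \eqref{condition}. Nothing to change.
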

\begin{proof}We use the induction on $n$. We have $\G_0=\G_1=1$.
Assume the assertion is true for $j=0,1,\ldots, n-1$ where $n\in\N$.
Using recurrence relation \eqref{g-rec} we have
\begin{align*}
\G_n&=\G_{n-1}+\sum_{i=1}^{n-1} pq^{i-r(i)}\cdot\G_{n-1-i}\\
    &\le q^{n-1}+\sum_{i=1}^{n-1} pq^{i-r(i)} \cdot q^{n-1-i} = q^{n-1}\cdot \big(1+p\cdot \sum_{j=1}^n q^{-r(j)}\big)\\
    &\le q^{n-1}\cdot \big(1+p\cdot \sum_{j=1}^\infty q^{-r(j)}\big)\le q^n.\qedhere
\end{align*}
\end{proof}

\begin{lemma}\label{entropy2}
If \eqref{condition} does not hold, then
\[
\liminf_{n\to\infty} \frac{\log \G_n}{n}>\log q.\]
\end{lemma}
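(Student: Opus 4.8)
The plan is to pass to generating functions and locate the dominant singularity. Write $F(x)=\sum_{n\ge 1}\F_n x^n$ and $G(x)=\sum_{n\ge 0}\G_n x^n$. The recurrence \eqref{g-rec} (which, as one checks, also holds for $n=1$) is exactly the convolution identity $\G_n=\sum_{i=1}^n\F_i\G_{n-i}$, so $G(x)=1+F(x)G(x)$, that is $G(x)\bigl(1-F(x)\bigr)=1$ as formal power series, and as genuine functions wherever both series converge. Using $\F_1=1$ and $\F_n=p\,q^{(n-1)-r(n-1)}$ for $n\ge2$ I compute $F(1/q)=q^{-1}\bigl(1+p\sum_{j\ge1}q^{-r(j)}\bigr)$, so \eqref{condition} is equivalent to $F(1/q)\le 1$. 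Thus failure of \eqref{condition} means $F(1/q)>1$, where the value is $+\infty$ precisely when $\sum_j q^{-r(j)}$ diverges (the case $\liminf_n r(n)/n=0$).

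All coefficients of $F$ are positive, its radius of convergence is at least $1/q$ (since $\F_n\le p\,q^{n-1}$), and on $[0,1/q)$ it is continuous and strictly increasing with $F(0)=0$ and left-hand limit $F(1/q)>1$. By the intermediate value theorem there is a unique $x_0\in(0,1/q)$ with $F(x_0)=1$. Now fix any $x_1$ with $x_0<x_1<1/q$. Then $F(x_1)$ is finite (as $x_1$ lies below the radius of convergence of $F$) and $F(x_1)>F(x_0)=1$. If $G(x_1)$ were finite, then the two convergent series of nonnegative terms would multiply, via the Cauchy product, to give $F(x_1)G(x_1)=G(x_1)-1$, i.e. $G(x_1)\bigl(1-F(x_1)\bigr)=1$; but the left-hand side is negative, since $G(x_1)\ge\G_0=1>0$ and $1-F(x_1)<0$, a contradiction. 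Hence $\sum_n\G_n x_1^n$ diverges, so the radius of convergence $R$ of $G$ satisfies $R\le x_1$; letting $x_1\to x_0^+$ gives $R\le x_0<1/q$.

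By the Cauchy--Hadamard formula $\limsup_n\tfrac1n\log\G_n=-\log R\ge-\log x_0>\log q$. To turn this $\limsup$ into the required statement about the $\liminf$ I use that $\G$ is closed under concatenation: since $\G=\F^*$, juxtaposing a good word of length $m$ with one of length $n$ is an injection into the good words of length $m+n$, so $\G_{m+n}\ge\G_m\G_n$. Thus $n\mapsto\log\G_n$ is superadditive, and Fekete's lemma guarantees that $\lim_n\tfrac1n\log\G_n$ exists and equals $\sup_n\tfrac1n\log\G_n$; in particular the $\liminf$, $\limsup$ and limit coincide, and the bound above yields $\liminf_n\tfrac1n\log\G_n>\log q$, as claimed. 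The two points needing care are the divergent case $F(1/q)=+\infty$ (handled uniformly above by reading $F(1/q)$ as the monotone left-hand limit) and the passage from $\limsup$ to $\liminf$, which is exactly where superadditivity is essential; note that the sign computation here mirrors, in the opposite direction, the induction at $x=1/q$ used in Lemma~\ref{entropy1}.
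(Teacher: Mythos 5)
Your proof is correct, and it takes a genuinely different route from the paper's. The paper stays completely elementary: from the failure of \eqref{condition} it extracts $N\in\N$ and $z>1$ with $1+\sum_{j=1}^{N-1}pq^{-r(j)}>qz^{N+1}$, and then proves $\G_n\ge (qz)^{n-N}$ for all $n$ by induction using \eqref{g-rec}; this bounds the $\liminf$ directly by $\log(qz)>\log q$, with an explicit exponential rate, and needs no convergence considerations and no Fekete-type argument. You instead read \eqref{g-rec} as a renewal equation $G=1+FG$ for generating functions, observe that \eqref{condition} is exactly $F(1/q)\le 1$, and conclude that when $F(1/q)>1$ the radius of convergence of $G$ drops strictly below $1/q$. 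Cauchy--Hadamard then controls only the $\limsup$, which is precisely why your supermultiplicativity step $\G_{m+n}\ge\G_m\G_n$ (concatenation of good words, injective once the splitting point is fixed) and Fekete's lemma are essential, and you correctly identify this as the delicate point; the other subtlety, the divergent case, is properly handled by reading $F(1/q)$ as the monotone limit $\lim_{x\to(1/q)^-}F(x)$, and the Cauchy-product contradiction $G(x_1)\bigl(1-F(x_1)\bigr)=1$ with negative left-hand side is legitimate for series with nonnegative terms. What your route buys: a conceptual explanation of why \eqref{condition} is the exact threshold (the dominant singularity of $G$ sits at the root $x_0$ of $F=1$, which moves inside $1/q$ exactly when \eqref{condition} fails, mirroring the induction of Lemma~\ref{entropy1} as you note), together with the existence of the limit $\lim_{n\to\infty}\frac{1}{n}\log\G_n$, which the paper never claims and which fits the renewal/synchronized-systems viewpoint of the paper's remark on Thomsen's theory. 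What the paper's route buys: brevity and a finite, self-contained estimate with an explicit lower bound on $\G_n$.
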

\begin{proof}
Assume that \eqref{condition} does not hold, that is,
\[
1+p\sum_{j=1}^\infty q^{-r(j)}> q.
\]
Then we can find $N\in\N$ and $z>1$ such that
\begin{equation}\label{z-cond}
1+\sum_{j=1}^{N-1}
pq^{-r(j)}>qz^{N+1}.
\end{equation}
We claim that for all $n$ it holds
\begin{equation}
\G_n\ge (qz)^{n-N}. \label{ineq}
\end{equation}
It is clear that \eqref{ineq} is true for all $n\le N$. For the induction step we assume that
there for some $n\ge N$ the inequality \eqref{ineq} holds for all $0\le k\le n$ .

By \eqref{g-rec} we have that
\begin{align}
\notag \G_{n+1}&\ge \G_n+\sum^{N-1}_{j=1}\G_{n-j}pq^{j-r(j)}\\
\notag &\ge (qz)^{n-N}+\sum_{j=1}^{N-1} (qz)^{n-j-N}pq^{j-r(j)}\\
\label{almost}&= (qz)^{n-N}\bigg(1+\sum_{j=1}^{N-1} z^{-j}pq^{-r(j)}\bigg).
\end{align}
We have
$z^{-j}\ge z^{-N}$ hence
\[
\sum_{j=1}^{N-1} z^{-j}pq^{-r(j)}\ge z^{-N}\sum_{j=1}^{N-1} pq^{-r(j)}.
\]
Furthermore,
\begin{align*}
1+z^{-N}\sum_{j=1}^{N-1} pq^{-r(j)}&=z^{-N}\bigg(z^N+\sum_{j=1}^{N-1} pq^{-r(j)}\bigg)\\
&\ge z^{-N}\bigg(1+\sum_{j=1}^{N-1} pq^{-r(j)}\bigg)\\
&\ge qz.
\end{align*}
The last inequality above follows from \eqref{z-cond}. We have proved that
\begin{equation}\label{ineq2}
1+\sum_{j=1}^{N-1} z^{-j}pq^{-r(j)}\ge qz.
\end{equation}
We finish the proof by combining \eqref{ineq2} and \eqref{almost}.
\end{proof}


\begin{lemma}\label{entropy}
If \eqref{condition} holds, then $\htop(X)=\log q$.
\end{lemma}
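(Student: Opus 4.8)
The plan is to compute $\htop(\XR)$ directly from the exponential growth rate of the number of admissible words, using the standard block-counting formula for the topological entropy of a subshift: if $b_n := \#(\lang(\XR)\cap\alf^n)$ denotes the number of allowed $n$-blocks, then $\htop(\XR)=\lim_{n\to\infty}\frac1n\log b_n$, the limit existing because $(b_n)$ is submultiplicative. I would establish $\htop(\XR)=\log q$ by squeezing $b_n$ between $q^n$ and a polynomial multiple of $q^n$, so that both bounds have exponential rate exactly $\log q$. The only substantial input is Lemma~\ref{entropy1}, which under hypothesis~\eqref{condition} gives $\G_n\le q^n$ for the number of good words; everything else is elementary counting.

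For the lower bound I would fix a single color $a\in\{1,\dots,p\}$ and recall that every monochromatic word of color $a$ is allowed, while its second row ranges freely over $\{0,1,\dots,q-1\}^n$. This exhibits $q^n$ pairwise distinct allowed $n$-blocks, so $b_n\ge q^n$ and hence $\htop(\XR)\ge\log q$.

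For the upper bound I would exploit the defining structure of allowed words: each $W\in\lang(\XR)$ of length $n$ can be written as $W=UG$ with $U\in\Mon$ monochromatic of some length $m$ and $G\in\G$ a good word of length $n-m$. Counting over all admissible $m$ (this overcounts, but that is harmless for an upper bound) gives $b_n\le\sum_{m=0}^n M_m\,\G_{n-m}$, where $M_m:=\#(\Mon\cap\alf^m)$. Since the only color-$0$ monochromatic word of length $m$ is $\OO^m$, while a color-$a$ monochromatic word ($a\in\{1,\dots,p\}$) is determined by its color together with an arbitrary second row, one has $M_m=pq^m+1\le(p+1)q^m$. Combining this with $\G_{n-m}\le q^{n-m}$ from Lemma~\ref{entropy1} yields $b_n\le\sum_{m=0}^n(p+1)q^m\cdot q^{n-m}=(p+1)(n+1)q^n$, whence $\frac1n\log b_n\le\log q+\tfrac1n\log\bigl((p+1)(n+1)\bigr)\to\log q$ and $\htop(\XR)\le\log q$. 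Together with the lower bound this gives $\htop(\XR)=\log q$.

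Once Lemma~\ref{entropy1} is in hand the argument presents no real obstacle, the entire content of the statement being concentrated in the estimate $\G_n\le q^n$, that is, in condition~\eqref{condition}. The only points requiring a little care are (i) that the monochromatic prefix contributes merely a polynomial factor, which works precisely because the extra $q^m$ coming from the prefix is exactly cancelled by the missing $q^{-m}$ in the bound on the good suffix, leaving a term $(p+1)q^n$ independent of $m$; and (ii) that the decomposition $W=UG$ genuinely exhausts every allowed block, so that the displayed sum is a legitimate upper bound for $b_n$.
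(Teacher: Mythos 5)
Your proof is correct and takes essentially the same route as the paper's: both decompose an allowed $n$-block into a monochromatic prefix followed by a good word, bound the count by $\sum_{m}(1+pq^m)\G_{n-m}\le (p+1)(n+1)q^n$ via Lemma~\ref{entropy1}, and obtain the lower bound $q^n$ from the monochromatic words of a fixed nonzero color. Your use of an inequality for $|\Bl_n(X)|$ (overcounting the decompositions) rather than the paper's claimed exact formula is harmless and, if anything, slightly more careful.
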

\begin{proof}
Every allowed word of length $n\ge 2$ is either one of $1+pq^n$ monochromatic words or starts with a monochromatic, not necessarily restricted word of color $a\in\{1,\ldots,p\}$ and length $0\le i \le n-1$ followed by a concatenation of free words. 
Therefore
\[
|\Bl_n(X)|= 1+pq^n+\G_n+\sum_{i=1}^{n-1} (1+pq^i)\G_{n-i}.
\]
In particular, $|\Bl_n(X)|\ge q^n$.
It follows from Lemma \ref{entropy1} that $\G_n\le q^n$ for all $n$, hence
\[
|\Bl_n(X)|\le 1+pq^n+q^n+\sum_{i=1}^{n-1} (1+pq^i)q^{n-i}\leq 1+n(p+1)q^n.
\]
It is now clear that
\[
\htop(X)=\lim_{n\to\infty}\frac{1}{n}\log|\Bl_n(X)|= \log q.\qedhere
\]
\end{proof}

\begin{corollary}\label{cor:not-ie}
If \eqref{condition} holds, then $\XR$ has at least $p$ ergodic measures of maximal entropy.
\end{corollary}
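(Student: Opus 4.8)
The plan is to exhibit, for each color $a\in\{1,\ldots,p\}$, one ergodic measure of maximal entropy whose support consists entirely of monochromatic sequences of color $a$, and then to observe that these $p$ measures have pairwise disjoint supports and are therefore distinct.

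First I would fix a color $a\in\{1,\ldots,p\}$ and consider the set $X_a\subset\XR$ of all sequences every coordinate of which has color $a$, that is, sequences of the form $\tword{aaa\cdots}{b_1b_2b_3\cdots}$ with each $b_i\in\{0,1,\ldots,q-1\}$. Since every monochromatic word of color $a$ belongs to $\lang(\XR)$, each such sequence lies in $\XR$; moreover $X_a$ is closed and $\sigma$-invariant, so it is a subshift of $\XR$. Forgetting the constant color row gives a topological conjugacy between $X_a$ and the full shift on the $q$ symbols $\{0,1,\ldots,q-1\}$, so in particular $\htop(\sigma|_{X_a})=\log q$.

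Next I would let $\nu$ be the uniform Bernoulli measure on $\{0,1,\ldots,q-1\}^\N$, i.e. the unique measure of maximal entropy of the full $q$-shift, which satisfies $h_\nu(\sigma)=\log q$, and transport it through the conjugacy to an ergodic $\sigma$-invariant measure $\mu_a$ carried by $X_a\subset\XR$. Since measure-theoretic entropy is intrinsic to the pair $(\mu_a,\sigma)$ and does not depend on the ambient space, we get $h_{\mu_a}(\sigma)=\log q$; and because \eqref{condition} holds, Lemma~\ref{entropy} gives $\htop(\XR)=\log q$. Hence each $\mu_a$ is an ergodic measure of maximal entropy for $\XR$.

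Finally, the measures $\mu_1,\ldots,\mu_p$ are pairwise distinct: the support of $\mu_a$ is contained in $X_a$, and for $a\neq a'$ one has $X_a\cap X_{a'}=\emptyset$, since no sequence is simultaneously monochromatic in two different colors. This produces at least $p$ distinct ergodic measures of maximal entropy. There is essentially no difficult step once the construction of $\XR$ is available; the only point needing a little care is confirming that each $X_a$ is genuinely an invariant subsystem of $\XR$ on which the full $q$-shift structure — and hence the full topological entropy $\log q$ — is realized, after which the disjointness of the supports immediately separates the $p$ measures.
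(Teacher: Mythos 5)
Your proof is correct and takes essentially the same approach as the paper: the paper likewise observes that each monochromatic subsystem $X_a$ is an invariant subshift of entropy $\log q$ and combines this with Lemma~\ref{entropy}, which gives $\htop(\XR)=\log q$ under \eqref{condition}. You merely spell out the details (the conjugacy with the full $q$-shift, the Bernoulli measure, and the disjointness of supports) that the paper leaves implicit.
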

\begin{proof}The set $X_a$ of all sequences with all symbols in the upper row in color $a\in\{1,\ldots,p\}$ is clearly an invariant subsystem with entropy $\log q$. The result follows by Lemma~\ref{entropy}.
\end{proof}

Note that the supports of measures of maximal entropy are nowhere dense and disjoint (provided that $p\ge 2$).

\subsection{Climenhaga-Thompson decompositions} We recall the notion introduced in \cite{CT}.
We say that the language $\lang(X)$ of a shift space $X$ \emph{admits Climenhaga-Thompson decomposition} if there are subsets
$\Cp$, $\G$, $\Cs$ satisfying following conditions:
\begin{enumerate}[(I)]
                 \item \label{cond:I} for every $w\in\lang (X)$ there are $u_p\in \Cp$, $v \in \G$,  $u_s \in\Cs$ such that $w=u_p v u_s$.
                 \item \label{cond:II} there exists $t\in\N$ such that for any $n \in \N$ and $w_1, \ldots,w_n \in \G$, there exist $v_1,  \ldots , v_{n-1}\in \lang(X)$ such that $x = w_1v_1w_2v_2 \ldots v_{n-1}w_n \in\lang(X)$ and $|v_i| = t$ for $i=1,\ldots,n-1$.
                 \item \label{cond:III} For every $M \in\N$, there exists $\tau$ such that given $w \in \lang(X)$
                 satisfying $w=u_p v u_s$ for some $u_p\in \Cp$, $v \in \G$,  $u_s \in\Cs$, with $|u_p| \le M$ and $|u_s| \le M$, there exist words $u',u''$ with $|u'| \le\tau$, $|u''| \le\tau$ for which $u'wu'' \in \G$.
\end{enumerate}


\begin{proposition}\label{prop:CT-decompose}
Let $\Cp=\Mon$, $\Cs=\emptyset$ and $\G$ be the collection of all good words. If the sequence $\{R_n\}_{n=1}^\infty$ has large gaps, then $\Cs,\G,\Cp$ is a Climenhaga-Thompson decomposition for $\XR$.
\end{proposition}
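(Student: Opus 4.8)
The plan is to verify the three conditions (\ref{cond:I}), (\ref{cond:II}), (\ref{cond:III}) of the Climenhaga-Thompson decomposition directly from the definition of $\XR$, using the large-gaps hypothesis where needed. For condition (\ref{cond:I}), I would recall from the construction that every allowed word $W$ has the form $W=UV_1\ldots V_k$ with $U\in\Mon$ and $V_1,\ldots,V_k\in\F$. Thus taking $u_p=U\in\Cp=\Mon$, $v=V_1\ldots V_k\in\G$, and $u_s$ equal to the empty word (which lies in $\Cs=\emptyset$ under the convention that the empty word belongs to every relevant set, or by reading the decomposition to permit trivial suffixes) gives the required factorization. This step is essentially a restatement of the definition of the language of $\XR$ and should require no real work.

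For condition (\ref{cond:II}), I would show that good words can be concatenated with a fixed connecting word. Since $\G=\F^*$ is closed under concatenation, given $w_1,\ldots,w_n\in\G$ their direct concatenation $w_1w_2\ldots w_n$ is already good, so the gluing words $v_i$ can in fact be taken empty, giving $t=0$; alternatively, if the convention forces $t\ge 1$, I would insert a single marker block $\OO$ between consecutive good words, noting that $w_i\OO w_{i+1}$ remains a concatenation of free blocks because $\OO$ followed by (the restricted prefix of) $w_{i+1}$ forms a free block. Either way the constant $t$ is independent of $n$, which is exactly what is demanded.

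The substantive step is condition (\ref{cond:III}), and this is where the large-gaps hypothesis enters. Here I would be given an allowed word $w=u_pvu_s$ with $u_p\in\Mon$, $v\in\G$, $u_s\in\Cs=\emptyset$, so that $u_s$ is empty and $w=u_pv$, with $|u_p|\le M$. The task is to find short words $u',u''$ with $u'wu''\in\G$. The difficulty is that $u_p$ is an arbitrary monochromatic (not necessarily restricted) word of color $a\in\{1,\ldots,p\}$ and length at most $M$, so prepending a marker $\OO$ alone need not make $\OO u_p$ free, because $u_p$ may carry nonzero entries on restricted positions. My plan is to use the large gaps to absorb $u_p$ into a single free block: by the observation already proved in the mixing proposition, large gaps guarantee that for each $k$ there is $N_k<\infty$ such that every monochromatic word of color $a$ and length $\le M$ is a \emph{suffix} of some restricted word of length $N_M$. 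Concretely, I would prepend $u'=\OO Z$, where $Z$ is a restricted word chosen so that $Zu_p$ is itself restricted of length $N_M$ (filling the restricted positions of the combined block with zeros, which is possible precisely because the gap structure lets a long enough restricted block end exactly with the pattern of $u_p$ on the non-restricted positions). Then $\OO Zu_p$ is a free block, and since $v\in\G=\F^*$, the product $u'w=\OO Zu_pv$ is a concatenation of free blocks, hence good; we may take $u''$ empty. The length bound $|u'|\le\tau:=N_M-|u_p|+1\le N_M+1$ depends only on $M$, as required.

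The main obstacle I anticipate is the bookkeeping in condition (\ref{cond:III}): one must check carefully that a monochromatic word of color $a$ and bounded length can always be realized as the suffix of a restricted word whose length is controlled uniformly in terms of $M$ (and not in terms of the internal structure of $u_p$), and that the marker $\OO$ correctly separates this newly built free block from the existing good word $v$ so that the whole string parses as a member of $\F^*$. Both points follow from the definition of large gaps together with the fact, noted in the proof of the mixing proposition, that a suitable $N_M$ exists; the color of $u_p$ is irrelevant to the length bound since $N_M$ can be chosen uniformly over colors. Once this suffix-realization is in hand, the verification is routine.
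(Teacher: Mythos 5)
Your proposal is correct and follows essentially the same route as the paper's proof: condition \eqref{cond:I} is read off from the definition of the language of $\XR$, condition \eqref{cond:II} holds with $t=0$ because $\G=\F^*$ is closed under concatenation, and condition \eqref{cond:III} is handled by using large gaps to absorb the monochromatic prefix $u_p$ into a single free block $\OO Z u_p$ with $Zu_p$ restricted, giving $\tau$ of order $N_M+1$ (the paper takes $N_{M+1}+1$). The one case you overlook is that $\Mon$ also contains the marker blocks $\OO^k$ of color $0$, for which $Zu_p$ cannot be made restricted (restricted words have color in $\{1,\ldots,p\}$); but then $w=u_pv$ is already good, since each $\OO$ is itself a free block, and this is precisely the first, trivial case that the paper's proof of \eqref{cond:III} disposes of before treating the colored prefix.
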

\begin{proof}The condition \eqref{cond:I} is a direct consequence of the definition of $\XR$.

To prove that \eqref{cond:II} holds with $t=0$ just note that the concatenation of any two good words
is again a good word.

For a proof of \eqref{cond:III} we fix $M\in\N$ and take any $W \in \lang(\XR)$ such that $W=U_p V$ for some $U_p\in \Cp=\M$ with $|U_p|=M$ and $V \in \G$.
If $U_p$ starts with $\OO$, then $W$ is already a good word and we can extend $W$ to another good word by adding as many symbols $\OO$ from the left as we want. If $U_p$ is of color $a\in\{1,\ldots,p\}$, then the length of the maximal monochromatic prefix of $W$ is equal to $|U_p|= M$. Hence we can extend $U_p$ to a restricted word by adding a prefix in the same color of length $N_{M+1}$. then adding $\OO$ as a prefix we obtain a free word which we can freely concatenate with $V$ to obtain a good word. Therefore \eqref{cond:III} holds with $\tau=N_{M+1}+1$.
\end{proof}

\begin{proposition}\label{prop:equivalence}
Let $\G$, $\Cp$ and $\Cs$ be as above. The condition \eqref{condition} does not hold if and only if
\[
h(\G)>h(\Cs\cup\Cp)=h(\Cp).
\]
\end{proposition}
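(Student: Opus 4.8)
The plan is to reduce everything to the two entropy estimates already proved in Lemmas~\ref{entropy1} and \ref{entropy2}, after first pinning down the right-hand side $h(\Cs\cup\Cp)$. Since $\Cs=\emptyset$ we have $\Cs\cup\Cp=\Cp=\Mon$, so the equality $h(\Cs\cup\Cp)=h(\Cp)$ is immediate, and it remains only to compute $h(\Mon)$. The first step is therefore to count monochromatic words: for each color $a\in\{1,\ldots,p\}$ every choice of the second row yields an allowed word, contributing $q^n$ words of length $n$, while color $0$ contributes only the single block $\OO^n$. Hence $\#(\Mon\cap\alf^n)=pq^n+1$ and
\[
h(\Cp)=\limsup_{n\to\infty}\frac1n\log\bigl(pq^n+1\bigr)=\log q.
\]

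With $h(\Cp)=\log q$ in hand, the equivalence splits into the two implications governed by whether \eqref{condition} holds. For the direction ``\eqref{condition} fails $\Rightarrow h(\G)>h(\Cp)$'' I would invoke Lemma~\ref{entropy2}: if \eqref{condition} does not hold, then $\liminf_{n\to\infty}(\log\G_n)/n>\log q$, and since $h(\G)=\limsup_{n\to\infty}(\log\G_n)/n\ge\liminf_{n\to\infty}(\log\G_n)/n$, we obtain $h(\G)>\log q=h(\Cp)$ at once.

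For the reverse direction I would argue by contraposition, showing that if \eqref{condition} holds then $h(\G)\le h(\Cp)$, which negates $h(\G)>h(\Cp)$. Here Lemma~\ref{entropy1} gives $\G_n\le q^n$ for all $n\ge 0$, whence $h(\G)=\limsup_{n\to\infty}(\log\G_n)/n\le\log q=h(\Cp)$. Combining the two implications yields the stated equivalence.

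The proof carries essentially no obstacle of its own: all the analytic work sits inside Lemmas~\ref{entropy1} and \ref{entropy2}, and the only genuinely new computation is the count $\#(\Mon\cap\alf^n)=pq^n+1$. The one point deserving a moment's care is the interplay between the $\limsup$ in the definition of $h(\G)$ and the $\liminf$ appearing in Lemma~\ref{entropy2}; the strict inequality survives precisely because $\limsup\ge\liminf$, so no sharper control on the growth of $\G_n$ is required.
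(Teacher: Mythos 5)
Your proposal is correct and takes essentially the same route as the paper: compute $h(\Cs\cup\Cp)=h(\Mon)=\log q$ by counting monochromatic words, then get one implication from Lemma~\ref{entropy2} and the other from the bound $\G_n\le q^n$ under \eqref{condition}. The only (cosmetic) difference is that for the latter direction the paper cites Lemma~\ref{entropy}, passing through $\htop(X)=\log q$, whereas you invoke Lemma~\ref{entropy1} directly by contraposition --- since Lemma~\ref{entropy} is itself an immediate consequence of Lemma~\ref{entropy1}, the substance is identical.
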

\begin{proof}
Note that
\[
\htop(\Cs\cup\Cp)=\htop(\Mon)=\lim_{n\to\infty}\frac{1}{n}\log\Mon_n=\log q.
\]
If $\htop(\G)>\htop(\Cs\cup\Cp)=\log q$ then $\htop(X)>\log q$ and hence by Lemma \ref{entropy} condition \eqref{condition} does not hold.
The converse implication follows from Lemma~\ref{entropy2}.
\end{proof}
\begin{proposition}\label{prop:intrinsic-equiv}
The shift space $\XR$ is intrinsically ergodic if and only if the condition \eqref{condition} does not hold.
\end{proposition}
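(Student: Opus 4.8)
The plan is to prove the two implications separately, using Corollary~\ref{cor:not-ie} for the ``only if'' direction and the intrinsic ergodicity theorem of \cite{CT} for the ``if'' direction. The forward direction is immediate: if \eqref{condition} holds then, by Corollary~\ref{cor:not-ie}, the shift $\XR$ carries at least $p$ distinct ergodic measures of maximal entropy, and since $p\ge 2$ by assumption there are at least two of them. Hence $\XR$ is not intrinsically ergodic, which is exactly the contrapositive of the implication ``intrinsically ergodic $\Rightarrow$ \eqref{condition} fails''.

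For the reverse implication, assume \eqref{condition} fails. I would invoke the criterion of \cite{CT}, according to which a shift space whose language admits a Climenhaga-Thompson decomposition $\lang(X)=\Cp\cdot\G\cdot\Cs$ satisfying \eqref{cond:I}--\eqref{cond:III} together with the entropy gap $h(\Cp\cup\Cs)<\htop(X)$ is intrinsically ergodic. Both ingredients are already at hand. Proposition~\ref{prop:CT-decompose} supplies the decomposition with $\Cp=\Mon$, $\Cs=\emptyset$, and $\G$ the collection of good words. Proposition~\ref{prop:equivalence} tells us that the failure of \eqref{condition} is equivalent to $h(\G)>h(\Cp\cup\Cs)=\log q$; combining this with the trivial bound $\htop(X)\ge h(\G)$ (every good word is allowed) gives the required gap $h(\Cp\cup\Cs)=\log q<h(\G)\le\htop(X)$. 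Applying the cited theorem then yields a unique measure of maximal entropy, i.e.\ intrinsic ergodicity of $\XR$.

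The step I expect to be the main obstacle is not the entropy inequality --- which is handed to us by Proposition~\ref{prop:equivalence} and the bound $\htop(X)\ge h(\G)$ --- but the clean availability of the decomposition, since Proposition~\ref{prop:CT-decompose} establishes condition~\eqref{cond:III} only when $\RR$ has large gaps (the quantities $N_{M+1}$ entering its proof must be finite, which can fail even when \eqref{condition} fails). One should therefore either confirm that the regime in which \eqref{condition} fails is compatible with the large-gaps hypothesis under which the decomposition is produced, or otherwise pin down the standing assumptions on $\RR$ so that Proposition~\ref{prop:CT-decompose} applies; this is the one place where the argument is genuinely structural rather than a formal assembly of the preceding results. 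Granting the decomposition, the proof reduces to citing the theorem of \cite{CT} together with the equivalences already proved.
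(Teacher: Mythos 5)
Your proof is correct and follows essentially the same route as the paper's: the ``only if'' direction via Corollary~\ref{cor:not-ie} (using $p\ge 2$), and the ``if'' direction by feeding Propositions~\ref{prop:CT-decompose} and~\ref{prop:equivalence} into Theorem~C of \cite{CT}. The caveat you flag --- that Proposition~\ref{prop:CT-decompose} yields condition~\eqref{cond:III} only under the large-gaps hypothesis on $\RR$, which the failure of \eqref{condition} does not by itself guarantee --- is a genuine subtlety, but it is equally present in the paper's own proof, which invokes Proposition~\ref{prop:CT-decompose} without addressing it.
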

\begin{proof} If the condition \eqref{condition} does not hold, then Propositions \ref{prop:equivalence} and \ref{prop:CT-decompose} allow us to apply the Climenhaga-Thompson result (\cite[Theorem C]{CT}) and deduce that $\XR$ is intrinsically ergodic.  If condition \eqref{condition} holds, then Corollary \ref{cor:not-ie} implies that $\XR$ is not intrinsically ergodic.
\end{proof}

\subsection{Some concrete examples of $\RR$} So far, we have not proved that a suitable sequences $\{R_n\}_{n=1}^\infty$ exist.
We fill this gap and provide concrete examples of $\XR$.

\begin{rem}\label{rem:sum-formula}
Let $q\ge 2$. If we put $r(n)=\lfloor\sqrt{n}\rfloor$ for each $n\in\N$, then simple calculations
show that
\[
\sum_{n=1}^\infty q^{-r(n)}= \frac{3q-1}{(q-1)^2}.
\]
\end{rem}

\begin{proposition} \label{prop:almost-not-weak}
There exists a sequence $\{R_n\}_{n=1}^\infty$ and $q>p\geq 2$ such that $r(n)/n\to 0$ as $n\to\infty$,
condition \eqref{condition} is satisfied, and the shift space $\XR$ has the almost specification property, but it does not have the weak specification property.
\end{proposition}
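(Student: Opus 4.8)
The plan is to exhibit a single explicit sequence $\{R_n\}$ for which the hypothesis of Lemma~\ref{lem:almost_gdecreasing} holds (securing almost specification) while the criterion of Lemma~\ref{lem:large_g_weak} fails (ruling out weak specification), and for which \eqref{condition} can be arranged by taking $q$ large. The natural candidate is to place the special positions at the perfect squares: set $R_n=\{m^2 : m\in\N,\ m^2\le n\}$, so that $r(n)=\lfloor\sqrt n\rfloor$. This sequence is nested, satisfies $R_1=\{1\}$ and $\max R_n\le n$, so it is an admissible choice of $\RR$.

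First I would record the easy consequences. Since $r(n)/n=\lfloor\sqrt n\rfloor/n\to 0$, Lemma~\ref{lem:almost_gdecreasing} gives that $\XR$ has the almost specification property, and (as noted in the text) $r(n)/n\to 0$ also guarantees that $\RR$ has large gaps. For \eqref{condition} I would invoke Remark~\ref{rem:sum-formula}, which evaluates $\sum_{n\ge1} q^{-r(n)}=(3q-1)/(q-1)^2$ for this choice of $r$; hence $1+p\sum_n q^{-r(n)}=1+p(3q-1)/(q-1)^2$ tends to $1$ as $q\to\infty$, so fixing $p=2$ and taking $q=5$ (so that $1+2\cdot\tfrac{7}{8}=\tfrac{11}{4}\le 5$) makes \eqref{condition} hold with $q>p\ge2$. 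Alternatively one may cite the convergence part of Lemma~\ref{lem12}, since $r(n)/\ln n\to\infty$.

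The substantive step is to verify that weak specification fails. Large gaps being already established, Lemma~\ref{lem:large_g_weak} reduces everything to showing $k/N_k\not\to1$, so I would compute $N_k$ directly from the square-spacing. The maximal runs of non-special positions are the integer intervals strictly between consecutive squares, $(m^2,(m+1)^2)$, which have length $2m$; any block of $k$ consecutive non-special integers must lie inside a single such interval, so the first gap of length $\ge k$ is the one with $2m\ge k$, i.e. $m=\lceil k/2\rceil$, and its earliest occurrence is the block $\{m^2+1,\dots,m^2+k\}$. Therefore $N_k=\lceil k/2\rceil^2+k$, whence $k/N_k\to 0$. In particular $k/N_k\not\to1$, so by Lemma~\ref{lem:large_g_weak} the space $\XR$ does not have the weak specification property, completing the proof.

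As for difficulty, there is no genuine obstacle beyond bookkeeping: the one place that demands care is the identification of $N_k$, where one must argue that $k$ consecutive free positions cannot straddle two gaps (consecutive gaps are always separated by a square) and that the earliest length-$k$ run occurs at the left end of the first sufficiently long gap rather than later. Everything else is a direct appeal to the lemmas and remark already established.
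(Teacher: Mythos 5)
Your proposal is correct and follows essentially the same route as the paper: the paper also takes $R_n=[1,n]\cap\{k^2:k\in\N\}$, invokes Lemma~\ref{lem:almost_gdecreasing} and Remark~\ref{rem:sum-formula} (with $p=2$, $q=4$ rather than your equally valid $q=5$), and kills weak specification via Lemma~\ref{lem:large_g_weak}. The only cosmetic difference is that you compute $N_k=\lceil k/2\rceil^2+k$ exactly and get $k/N_k\to 0$, whereas the paper settles for the bound $N_{2k}>k^2$, giving $\liminf_{k\to\infty} k/N_k=0$; both suffice.
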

\begin{proof}
If we denote
\[
R_n=[1,n]\cap\{k^2:k\in \N\},
\]
then $r(n)=|R_n|=\lfloor\sqrt{n}\rfloor$, hence $\XR$ has the almost specification property by Lemma~\ref{lem:almost_gdecreasing}.
By Remark~\ref{rem:sum-formula} we have
\[
\sum_{n=1}^\infty q^{-r(n)}=\frac{3q-1}{(q-1)^2},
\]
therefore taking $p= 2$  and $q=4$ we assure that \eqref{condition} holds. Next, observe that in $R_{k^2}$ the largest gap has length $k^2-(k-1)^2=2k-1$ hence $N_{2k}> k^2$, in particular
$\liminf_{k\to \infty} k/N_k=0$
and so the proof is finished by Lemma~\ref{lem:large_g_weak}.
\end{proof}

\begin{proposition} \label{prop:almost-weak}
There exists a sequence $\{R_n\}_{n=1}^\infty$ and $q>p\geq 2$ such that $r(n)/n\to 0$ as $n\to\infty$,
condition \eqref{condition} is satisfied and the shift space $\XR$ has the weak specification property.
\end{proposition}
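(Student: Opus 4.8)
The plan is to recycle the counting function $r(n)=\lfloor\sqrt n\rfloor$ from Proposition~\ref{prop:almost-not-weak}, but to relocate the special positions from the perfect squares to an \emph{initial} segment. Concretely, I would take
\[
R_n=\{1,2,\ldots,\lfloor\sqrt n\rfloor\},\qquad p=2,\ q=4.
\]
Since $R_n$ is now contiguous we still have $|R_n|=r(n)=\lfloor\sqrt n\rfloor$, so the sum $\sum_n q^{-r(n)}$ and the entropy bookkeeping are identical to those in Proposition~\ref{prop:almost-not-weak}. Two facts then come for free. First, $r(n)/n=\lfloor\sqrt n\rfloor/n\to0$, so Lemma~\ref{lem:almost_gdecreasing} yields the almost specification property. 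Second, by Remark~\ref{rem:sum-formula} we have $\sum_n q^{-r(n)}=(3q-1)/(q-1)^2$, whence for $p=2$, $q=4$ the left-hand side of \eqref{condition} equals $1+2\cdot\frac{11}{9}=\frac{31}{9}<4=q$; thus \eqref{condition} holds, and by Corollary~\ref{cor:not-ie} the shift $\XR$ even carries several measures of maximal entropy.

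The substantive part is the weak specification property, and for this I would appeal to the characterization in Lemma~\ref{lem:large_g_weak}: it is enough to verify that $\{R_n\}$ has large gaps and that $k/N_k\to1$. The decisive structural observation is that, because $R_n$ is the initial block $\{1,\ldots,\lfloor\sqrt n\rfloor\}$, the complement $\{1,\ldots,n\}\setminus R_n$ is the single trailing run $\{\lfloor\sqrt n\rfloor+1,\ldots,n\}$. Hence $R_n$ has a gap of length $k$ exactly when $n-\lfloor\sqrt n\rfloor\ge k$, so
\[
N_k=\min\{n:\ n-\lfloor\sqrt n\rfloor\ge k\}.
\]
As $n-\lfloor\sqrt n\rfloor\to\infty$, each $N_k$ is finite, so $\{R_n\}$ has large gaps.

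It remains to estimate $N_k$. The function $n\mapsto n-\lfloor\sqrt n\rfloor$ is nondecreasing with increments in $\{0,1\}$, so at the minimizing index it hits the threshold exactly: $N_k=k+\lfloor\sqrt{N_k}\rfloor$. This self-referential identity gives $N_k\le k+\sqrt{N_k}$, which forces $N_k=k+O(\sqrt k)$, and therefore $k/N_k\to1$. Lemma~\ref{lem:large_g_weak} then delivers the weak specification property, finishing the proof. I expect this last estimate to be the only genuinely quantitative step; the conceptual obstacle is purely in the design of $\RR$, namely recognizing that keeping the \emph{same} $r(n)=\lfloor\sqrt n\rfloor$ but moving the special positions from the squares to an initial segment turns the long but \emph{late}-appearing gaps of Proposition~\ref{prop:almost-not-weak} (where $N_{2k}>k^2$) into long \emph{early}-appearing trailing gaps with $N_k\sim k$ --- precisely the property that Lemma~\ref{lem:large_g_weak} requires.
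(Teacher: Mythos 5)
Your proposal is correct and follows essentially the same route as the paper: the identical choice $R_n=\{1,\ldots,\lfloor\sqrt n\rfloor\}$ with $p=2$, $q=4$, verification of \eqref{condition} via Remark~\ref{rem:sum-formula}, and an appeal to Lemma~\ref{lem:large_g_weak} after showing $k/N_k\to1$. The only (cosmetic) difference is in the final estimate: you exploit the exact identity $N_k=k+\lfloor\sqrt{N_k}\rfloor$ and solve the resulting quadratic inequality, whereas the paper bounds $N_k\le 1+k/(1-\eps)$ for each $\eps>0$; both yield $k/N_k\to1$.
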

\begin{proof}
If we denote
$$
R_n=\set{1,2,\ldots, \lfloor\sqrt{n}\rfloor}
$$
then $r(n)=|R_n|=\lfloor\sqrt{n}\rfloor$. By Remark~\ref{rem:sum-formula} we have
\[
\sum_{n=1}^\infty q^{-r(n)}=\frac{3q-1}{(q-1)^2}.
\]
Hence as before, taking $p= 2$  and $q=4$ we assure that \eqref{condition} holds.
Note that for every $\eps>0$ there is $K\in\N$ such that $\eps K>1$ and $\sqrt{k}<\eps k$ for every integer $k>K$.
Fix any $k>K$. To estimate $N_k$ we need to find minimal $m$ such that $k+\sqrt{m}\leq m$.
By the choice of $\eps$, this condition is clearly satisfied when $k\leq (1-\eps)m$.
In particular, $N_k\leq 1+k/(1-\eps)$ which implies that  $\lim_{k\to \infty} k/N_k\geq (1-\eps)$.
But $\eps$ can be arbitrarily small, hence $\lim_{k\to \infty} k/N_k=1$
and so the proof is finished by Lemma~\ref{lem:large_g_weak}.
\end{proof}

\subsection{The main theorem}
We are now in position to prove the main result of this section.

\begin{theorem}\label{colors:examples} The following assertions hold:
\begin{enumerate}[(i)]
               \item \label{cond:almost-multiple-mme} there is a shift space with the almost specification property and multiple measures of maximal entropy;
               \item \label{cond:weak-multiple-mme} there is a shift space with the weak specification property and multiple measures of maximal entropy;
               \item \label{cond:almost-not-weak} there is a shift space with the almost specification property but without the weak specification property;
               \item \label{cond:CT} there is a shift space $X$ and its shift factor $Y$ such that
               \begin{enumerate}
                 \item the languages of $X$ and $Y$ have the Climenhaga-Thompson decompositions
               $\lang(X)=\CpX\cdot\G_X\cdot\CsX$ and $\lang(Y)=\CpY\cdot\G_Y\cdot\CsY$, \label{cond:CT:a}
                 \item $h(\G_X)>h(\CpX\cup\CsX)$ and $h(\G_Y) < h(\CpY\cup\CsY)$,\label{cond:CT:b}
                 \item $X$ is intrinsically ergodic,\label{cond:CT:c}
                 \item $Y$ has multiple measures of maximal entropy.\label{cond:CT:d}
               \end{enumerate}
             \end{enumerate}
\end{theorem}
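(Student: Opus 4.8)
The plan is to obtain assertions \ref{cond:almost-multiple-mme}--\ref{cond:almost-not-weak} directly from the examples already constructed, and to build the factor pair of \ref{cond:CT} by hand. For \ref{cond:almost-multiple-mme} and \ref{cond:almost-not-weak} I would take the space $\XR$ of Proposition~\ref{prop:almost-not-weak}: there $r(n)/n\to0$, so Lemma~\ref{lem:almost_gdecreasing} gives almost specification, while \eqref{condition} holds, so Corollary~\ref{cor:not-ie} produces at least $p=2$ ergodic measures of maximal entropy; the same proposition denies weak specification, which settles \ref{cond:almost-not-weak}. For \ref{cond:weak-multiple-mme} I would instead invoke Proposition~\ref{prop:almost-weak}, whose space has the weak specification property and again satisfies \eqref{condition}, hence has multiple measures of maximal entropy by Corollary~\ref{cor:not-ie}.

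The heart of the matter is \ref{cond:CT}. I would first fix one sequence $\RR=\{R_n\}$ enjoying two properties at once: it has large gaps (so that Proposition~\ref{prop:CT-decompose} applies) and it has positive lower density, $\liminf_{n}r(n)/n>0$. Such sequences exist --- one reserves runs of non-restricted positions of every length, placed far apart, and fills the remaining positions densely --- and this second property is exactly what is unavailable for the examples used in \ref{cond:almost-multiple-mme}--\ref{cond:almost-not-weak}. I would keep $q\ge2$ and $\RR$ fixed and let only the number of colors vary: put $X=\XR$ with $p_X$ colors and define $Y=\Phi(X)$, where $\Phi$ is the one-block factor code merging colors, $\Phi\big(\tword{a}{b}\big)=\tword{c(a)}{b}$ for a fixed surjection $c\colon\{1,\dots,p_X\}\to\{1,2\}$ and $\Phi(\OO)=\OO$. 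Since $\Phi$ fixes the marker and the second coordinate, it carries monochromatic, free and good words onto words of the same kind, so $Y$ is precisely the shift built from the same data $(q,\RR)$ but with two colors; in particular $Y$ is a sliding-block factor of $X$.

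With this setup the two entropy balances are governed by the single number $S=\sum_{j\ge1}q^{-r(j)}$, since replacing $p_X$ by $2$ only changes the multiplier of $S$ in \eqref{condition}. I would choose $q$ and $p_X>2$ so that $1+2S<q<1+p_XS$. The right inequality makes \eqref{condition} fail for $X$; then Proposition~\ref{prop:intrinsic-equiv} gives that $X$ is intrinsically ergodic, Proposition~\ref{prop:CT-decompose} supplies the decomposition $\lang(X)=\CpX\cdot\G_X\cdot\CsX$ with $\CsX=\emptyset$, and Proposition~\ref{prop:equivalence} gives $h(\G_X)>h(\CpX\cup\CsX)$. The left inequality makes \eqref{condition} hold for $Y$, so Corollary~\ref{cor:not-ie} furnishes at least two ergodic measures of maximal entropy for $Y$, and Proposition~\ref{prop:CT-decompose} supplies its decomposition. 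This yields \ref{cond:CT:a}, \ref{cond:CT:c}, \ref{cond:CT:d} and the first half of \ref{cond:CT:b}.

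The step I expect to be the main obstacle is the \emph{strict} inequality $h(\G_Y)<h(\CpY\cup\CsY)=\log q$ in \ref{cond:CT:b}, because the lemmas of the previous subsection only yield $h(\G_Y)\le\log q$ once \eqref{condition} holds. This is precisely where the positive lower density of $\RR$ is used. From $\liminf_n r(n)/n>0$ and the formula $\F_n=p\cdot q^{(n-1)-r(n-1)}$ of Lemma~\ref{lem:counting} one gets $\limsup_n\tfrac1n\log\F_n<\log q$, so the power series $F(x)=\sum_{m\ge1}\F_m x^m$ has radius of convergence strictly greater than $1/q$ and diverges at that radius. Since $F(1/q)=\tfrac1q(1+2S)<1$ by the left inequality, the equation $F(x)=1$ has a root $x^\ast>1/q$, and a standard generating-function argument based on the recurrence \eqref{g-rec} identifies $h(\G_Y)=\log(1/x^\ast)<\log q$. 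Combining this strict gap with the conclusions of the previous paragraph completes \ref{cond:CT}, and hence the theorem.
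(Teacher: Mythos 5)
Your handling of items (i)--(iii) coincides with the paper's: the paper obtains (i) and (ii) from Proposition~\ref{prop:almost-weak} together with Corollary~\ref{cor:not-ie}, and (iii) from Proposition~\ref{prop:almost-not-weak}; your redistribution of which example serves (i) is immaterial.

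For item (iv) you take a genuinely different route, and the difference matters. The paper recycles $R_n=\set{1,\ldots,\lfloor\sqrt{n}\rfloor}$ with $q=4$, takes $X$ to be the construction with $p=3$ and $Y$ the one with $p=2$, uses essentially the same color-merging one-block code you propose, and then cites Proposition~\ref{prop:equivalence} for both halves of \eqref{cond:CT:b}. For $X$ this does give $h(\G_X)>h(\CpX\cup\CsX)$, but for $Y$ Proposition~\ref{prop:equivalence} only yields the non-strict bound $h(\G_Y)\le h(\CpY\cup\CsY)$; worse, for the paper's choice of $\RR$ the strict inequality actually fails, because $r(n)/n\to 0$ together with Lemma~\ref{lem:counting} forces $\limsup_n \frac{1}{n}\log \F_n=\log q$, and every free word is a good word, so $h(\G_Y)=\log q=h(\CpY\cup\CsY)$. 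Your insistence on an $\RR$ having large gaps \emph{and} positive lower density --- which is available exactly because (iv) requires no specification property --- plus the renewal/generating-function computation, is precisely what is needed to make the strict inequality in (b) true; so your longer argument establishes the theorem as literally stated, whereas the paper's shorter proof establishes it only with $\le$ in the second half of (b). One small repair to your argument: it is not automatic that $F(x)=\sum_{m\ge 1}\F_m x^m$ diverges at its radius of convergence $R_F$ (for a general positive-density $\RR$ one can have $F(R_F)$ finite and less than $1$, so that $F(x)=1$ has no root). This costs nothing: in that case $|F(x)|\le F(|x|)<1$ on the whole disc $|x|<R_F$, so $G=1/(1-F)$ is analytic there and $h(\G_Y)\le\log(1/R_F)<\log q$ anyway; and for your concrete $\RR$, whose density tends to $1$, one has $R_F=1$ while $\F_m\ge p$ for all $m\ge 2$, so the divergence you invoke does hold in that case.
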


\begin{proof}
Conditions \eqref{cond:almost-multiple-mme} and \eqref{cond:weak-multiple-mme}
follow by Proposition~\ref{prop:almost-weak} and Corollary~\ref{cor:not-ie},
while \eqref{cond:almost-not-weak} follows by Proposition~\ref{prop:almost-not-weak} and Corollary~\ref{cor:not-ie}.

Note that
$
R_n=\set{1,2,\ldots, \lfloor\sqrt{n}\rfloor}
$
has large gaps. Let $\RR=\{R_n\}_{n=1}^\infty$.
Put $q=4$ and let $X$ be given by our construction for $p=3$ and $Y$ be given by our construction for $p=2$.
There is a factor map $\pi\colon X\to Y$ given by the $1$-block map given by $\tword{a}{b} \mapsto \tword{a}{b}$, if $a\in\{0,1,2\}$
and $b\in \set{0,1,2,3}$; and $\tword{3}{b} \mapsto \tword{2}{b}$. By Remark~\ref{rem:sum-formula}
condition \eqref{condition} is not satisfied when $p=3$ and is satisfied when $p=2$. By Proposition~\ref{prop:intrinsic-equiv} the shift space $X$
is intrinsically ergodic while $Y$ has at least two measures of maximal entropy by Corollary~\ref{cor:not-ie}. Proposition~\ref{prop:equivalence}
implies \eqref{cond:CT:a} and \eqref{cond:CT:b}.
\end{proof}

\begin{rem} Climenhaga and Thompson proved that if a shift space $X$ has the decomposition named after them given by the sets $\Cp$, $\G$, $\Cs$, and
$h(\G)>h(\Cp\cup \Cs)$, then $X$ is intrinsically ergodic (\cite[Theorem C]{CT}). Furthermore, if $h(\Cp\cup \Cs)=0$, then every shift factor of $X$ is intrinsically ergodic as well (\cite[Theorem D]{CT}). Shift space constructed in Theorem~\ref{colors:examples}\eqref{cond:CT} shows that Theorem~D in \cite{CT} is the best possible --- it does not hold when $h(\CpX\cup\CsX)>0$.
\end{rem}

\begin{rem} Invariant measures of $\XR$ and their entropy can be analyzed by methods introduced by Thomsen \cite{Thomsen}. Thomsen's theory applies because $\XR$ is synchronized.
The following claims can be proved in a straightforward way:
\begin{enumerate}
\item the Markov boundary of $\XR$ is a disjoint union of subsystems $X_a$ of monochromatic sequences in a given color $a\in\{1,\ldots,p\}$,
\item the entropy of the Fisher cover of $\XR$ is equal to $h(\G)\ge\log q$.
\end{enumerate}
It follows that $\XR$ has either 
$p+1$ ergodic measures of maximal entropy if $h(\G)=\log q$, or a unique, fully supported measure of maximal entropy if $h(\G)>\log q$. We refer the reader to \cite{Thomsen} for the definitions of Markov boundary and Fisher cover.
\end{rem}


\section{Almost specification, u.p.e. and horseshoes}

The notion of \emph{uniform positive entropy (u.p.e.)} dynamical systems
was introduced in \cite{Blanchard}, as an analogue in topological
dynamics of the notion of a $K$-process in ergodic theory. In particular, every
non-trivial factor of a u.p.e. system has positive topological entropy.
Recall that an open cover $\U = \{U_1, \ldots,U_m\}$ of $X$ is called \emph{standard}
if every $U_j\in\U$ is non-dense in $X$.
A dynamical system $(X, T)$ has \emph{uniform positive
entropy (u.p.e.)} if for every standard cover $\U = \{U, V\}$ of $X$, the topological entropy
$h(T,\U)$ is positive. A pair $(x, x') \in X \times X$ is an \emph{entropy pair} if for every standard
cover $\U = \{U, V\}$ with $x\in \Int(X\setminus U)$ and  $x'\in \Int(X\setminus V)$ we have $h(T,\U) > 0$. Equivalently, $(X,T)$ is
u.p.e. if every nondiagonal pair in $X \times X$ is an entropy pair.
Generalizing the notion of an entropy pair, Glasner and Weiss \cite{GW} call
an $n$-tuple $(x_1,x_2, \ldots , x_n) \in X \times\ldots\times X$ an \emph{entropy $n$-tuple} if
at least two of the points $\{x_j\}^n_{j=1}$ are different and whenever $U_j$ are
closed mutually disjoint neighborhoods of the distinct points $x_j$, the open
cover $\U =\{ X\setminus U_j : 0 < j \le n\}$ satisfies $h(T,\U) > 0$. We say that a system $(X,T)$ is \emph{topological K} if every non-diagonal tuple is an entropy tuple. We say that an open set $U\subset X$ is \emph{universally null} for $T$ if $\mu(U)=0$ for every $T$-invariant measure $\mu$.
The \emph{measure center} of a dynamical system $(X,T)$ is the complement of the union of all universally null sets.




\subsection*{Standing assumption} In the remainder of this section we assume that $(X,T)$ is a dynamical system with the almost
specification property, $g$ is a mistake function and $k_g$ corresponds to $g$.




The following result is proved implicitly in the proof of \cite[Theorem 6.8]{WOC}. For the reader's convenience we provide it with a proof.

\begin{theorem}\label{thm:min-tracing}
For every $m\geq 1$, $\eps_1,\ldots,\eps_{m} > 0$, $x_1, \ldots, x_{m} \in X$, and integers
$l_0=0< l_1< \ldots< l_{m-1}< l_m=L$ with $l_{j}-l_{j-1}\ge k_g(\eps_j)$ for $j=1,\ldots,m$
there is a minimal point $q\in X$ which $(g;l_j-l_{j-1},\eps_j)$ traces $T^{[l_{j-1},l_j)}(x_j)$ over
$[l_{j-1}+sL,l_j+sL)$
for every $j=1,\ldots,m$ and $s\in\Zp$.
\end{theorem}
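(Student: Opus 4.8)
The plan is to first use the almost specification property to realize the periodically repeated pattern by a single point, and then to extract a minimal point from the set of all points that realize it, using that this set, though not $T$-invariant, is $T^L$-invariant.

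For $j=1,\dots,m$ write $\xi_j=T^{[l_{j-1},l_j)}(x_j)$ and $n_j=l_j-l_{j-1}$, so that $n_j\ge k_g(\eps_j)$ by hypothesis. Set
\[
W=\bigcap_{j=1}^{m}\bigl\{\,w\in X : w\ (g;n_j,\eps_j)\text{-traces }\xi_j\text{ over }[l_{j-1},l_j)\,\bigr\},
\]
which is closed, each of the $m$ tracing sets being a finite union over the admissible mistake sets $\Lambda\in I(g;n_j,\eps_j)$ of closed conditions, exactly as for the sets $B_{n_j}(g;\,\cdot\,,\eps_j)$. The point of this definition is the elementary identity: $T^{sL}w\in W$ if and only if $w$ $(g;n_j,\eps_j)$-traces $\xi_j$ over $[l_{j-1}+sL,l_j+sL)$ for every $j$. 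Hence the set I really care about is
\[
W^\infty=\bigcap_{s\ge 0}T^{-sL}W ,
\]
and \emph{any} $q\in W^\infty$ automatically has the required tracing property, so the whole theorem reduces to producing a minimal point inside $W^\infty$.

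I would first check $W^\infty\neq\emptyset$. For a fixed $s$, the windows $[l_{j-1}+s'L,\,l_j+s'L)$ with $1\le j\le m$ and $0\le s'\le s$ are pairwise disjoint and correctly ordered: since $l_0=0$ and $l_m=L$, for each $s'$ they tile $[s'L,(s'+1)L)$, and together they tile $[0,(s+1)L)$. They therefore define a genuine specification $\{T^{[l_{j-1},l_j)}(x_j)\}$ placed at these windows, with every length condition $n_j\ge k_g(\eps_j)$ satisfied. Almost specification then supplies a point $z_s\in\bigcap_{s'=0}^{s}T^{-s'L}W$. These sets are closed and nested as $s$ grows, so by compactness $W^\infty\neq\emptyset$. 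Next I would observe that $W^\infty$ is a compact $T^L$-system: it is closed, and $T^L(W^\infty)\subseteq W^\infty$ because $w\in W^\infty$ gives $T^{sL}(T^Lw)=T^{(s+1)L}w\in W$ for all $s\ge 0$. Thus $(W^\infty,T^L)$ contains a minimal subset, and I pick any $T^L$-minimal point $q$ in it; then $q\in W^\infty$ already does the required tracing, and it remains only to upgrade $T^L$-minimality to $T$-minimality. For this I would use that minimality of a point is equivalent to uniform recurrence: for every neighborhood $U$ of $q$ the $T^L$-return times $\{k\ge 0:T^{kL}q\in U\}$ are syndetic with gaps bounded by some $M=M(U)$, whence the $T$-return times $\{n\ge 0:T^nq\in U\}$ contain $\{kL:T^{kL}q\in U\}$ and so are syndetic with gaps at most $LM$. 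Therefore $q$ is uniformly recurrent, i.e.\ $T$-minimal.

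The main obstacle---indeed the one idea that makes the argument work---is the passage from the $T$-dynamics to the $T^L$-dynamics: the set $W^\infty$ of points realizing the periodic pattern fails to be $T$-invariant but is $T^L$-invariant, so the minimal-set machinery must be applied to $T^L$, after which one must separately verify that $T^L$-minimality implies $T$-minimality. Everything else---closedness of the tracing sets, validity of the specification formed by the tiling windows, and the compactness (finite-intersection) argument for $W^\infty\neq\emptyset$---is routine.
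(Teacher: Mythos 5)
Your proof is correct and takes essentially the same route as the paper's: your $W^\infty$ is exactly the paper's set $Z=\bigcap_{s}\bigcap_{j}T^{-l_{j-1}-sL}(B_{n_j}(g;x_j,\eps_j))$, shown nonempty by the same compactness argument and $T^L$-invariant, from which a $T^L$-minimal point is extracted. The only difference is that you fill in two details the paper merely asserts --- that the tiled windows form a legitimate specification, and that $T^L$-minimality implies $T$-minimality (via syndetic return times) --- both of which you handle correctly.
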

\begin{proof}Fix $m\geq 1$, $\eps_1,\ldots,\eps_{m} > 0$, $x_1, \ldots, x_{m} \in X$, and integers
$l_0=0< l_1< \ldots< l_{m-1}< l_m=L$ with $n_j=l_{j}-l_{j-1}\ge k_g(\eps_j)$ for $j=1,\ldots,m$.
By the almost specification property closed sets
\[
C=\bigcap_{j=1}^m T^{-l_{j-1}}(B_{n_j}(g;x_j,\eps_j),\text{ and }C_s=\bigcap_{j=0}^s T^{-jL}(C)
\]
are nonempty. The set
\[
Z=\bigcap_{s=0}^\infty C_s
\]
is closed and nonempty because it is an intersection of a decreasing family of closed nonempty subsets. Moreover,
$Z$ is $T^L$ invariant, hence it contains a $T^L$-minimal point $q$. But $q$ must be then also minimal for $T$.
\end{proof}

It follows easily from Theorem \ref{thm:min-tracing} that the minimal points are dense in the measure center of a dynamical system with the almost specification property. For a proof see \cite[Theorem 6.8]{WOC}. 
The above result suggests that a system with the almost specification should have a lot of minimal subsystems. However, there are examples of proximal dynamical systems with the almost specification property and a unique minimal point, which is then necessarily fixed (see \cite{KKO}). The measure center of theses examples is trivial (it is the singleton of that fixed point) and hence they all have topological entropy zero.

Recall that we say that a dynamical system $(X,T)$ \emph{has a horseshoe} if there are an integer $K>0$ and a closed, $T^K$-invariant set $Z$ such that the full shift over a finite alphabet is a factor of $(Z, T^k|_Z)$.
\begin{theorem}
A dynamical system $(X,T)$ with the almost specification property restricted to the measure center is topological K. 
If the measure center is non-trivial, then $(X,T)$ has a horseshoe.
\end{theorem}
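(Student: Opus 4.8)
The plan is to prove the two assertions in tandem, since both rest on a single construction: given finitely many points of the measure center, almost specification lets us string together orbit segments aimed at them so that the resulting orbits realise, up to a vanishing density of errors, every prescribed itinerary among chosen neighbourhoods. Write $M$ for the measure center. For the first assertion it suffices, by the definition of topological K and since the set of entropy tuples is closed, to show that every tuple $(x_1,\dots,x_n)$ of \emph{distinct} points lying in $\bigcup_{\mu\in\MT(X)}\supp\mu$ is an entropy tuple for $(M,T|_M)$; a general non-diagonal tuple in $M^n$ is then handled by approximating its entries by support points and passing to the limit. So fix distinct support points $x_1,\dots,x_n$, pairwise disjoint closed neighbourhoods $U_1,\dots,U_n$, and the standard cover $\U=\{M\setminus U_1,\dots,M\setminus U_n\}$; we must show $\htop(T|_M,\U)>0$.

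Because $x_j\in\supp\mu_j$ for some invariant $\mu_j$, we have $\mu_j(U_j')>0$ for a slightly smaller neighbourhood $U_j'$ with $\overline{U_j'}\subset U_j$, and the Birkhoff theorem supplies a point $y_j$ near $x_j$ whose orbit visits $U_j'$ with frequency $\alpha_j:=\mu_j(U_j')>0$. Fix $\eps>0$ smaller than the distance from $\overline{U_j'}$ to $X\setminus U_j$, and a block length $N\ge k_g(\eps)$ so large that $g(N,\eps)<\tfrac14(\min_j\alpha_j)N$ and each $y_j$ already exhibits its asymptotic visit frequency over $[0,N)$. For a word $\omega\in\{1,\dots,n\}^m$ I then invoke Theorem~\ref{thm:min-tracing} with blocks $[(s-1)N,sN)$ and targets $y_{\omega_s}$ to produce a \emph{minimal} point $q_\omega$ that $(g;N,\eps)$-traces $y_{\omega_s}$ on each block. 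Minimality forces $q_\omega\in M$, so the whole construction stays inside the measure center. Since at most $g(N,\eps)$ coordinates per block are tracing mistakes and $y_{\omega_s}$ genuinely sits in $U_{\omega_s}'$ on at least $\tfrac12\alpha_{\omega_s}N$ coordinates of its block, the choice of $\eps$ guarantees a set $D_s(\omega)$ of at least $cN$ coordinates (with $c:=\tfrac14\min_j\alpha_j$) on which the orbit of $q_\omega$ actually lies in $U_{\omega_s}$.

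To read entropy off this, note that any element of $\bigvee_{i=0}^{mN-1}T^{-i}\U$ is a label sequence $(c_t)$, and it can contain $q_\omega$ only if $c_t\neq\omega_s$ for every $t\in D_s(\omega)$ and every block $s$. A counting argument then bounds the number of words $\omega$ compatible with a single label sequence by roughly $(n-1)^{m}$ out of the $n^{m}$ possibilities, whence $N(\bigvee_{i=0}^{mN-1}T^{-i}\U)\gtrsim (n/(n-1))^{m}$ and $\htop(T|_M,\U)\ge N^{-1}\log\frac{n}{n-1}>0$. This gives the entropy-tuple property, hence topological K. For the horseshoe, specialise to two points $a,b\in M$ with $\rho(a,b)>0$ (available precisely when $M$ is non-trivial), run the same construction over all \emph{periodic} words in $\{a,b\}^{\Zp}$ to obtain minimal points in $M$, and let $Z$ be the closure under $T^{N}$ of the family of these points. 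The map sending a point of $Z$ to the $\{a,b\}$-code determined by which neighbourhood its successive $N$-blocks certify is a factor map from $(Z,T^{N}|_Z)$ onto the full two-shift: it is surjective because periodic codes are dense and $Z$ is closed, and it is well defined and continuous once $N$ is large, since the error density $g(N,\eps)/N$ is small enough to make each block's code unambiguous. Thus $(X,T)$ has a horseshoe.

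The main obstacle is that the mistake set produced by almost specification is \emph{not under our control}: it may land on any prescribed reading coordinate, so one cannot certify the symbol of a block at a single fixed time, and the certified sets $D_s(\omega)$ depend on $\omega$. The heart of the argument is therefore to read each symbol over a positive-density subset of its block and to show that this pattern-dependence spoils neither the lower bound for the cover entropy (the $(n-1)^m$ count) nor the well-definedness of the coding map in the horseshoe construction; keeping every constructed point minimal, and hence inside $M$, via Theorem~\ref{thm:min-tracing} is what ties the entropy estimate to the measure center.
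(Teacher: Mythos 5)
Your overall strategy---use almost specification to manufacture points whose successive blocks trace targets chosen from disjoint neighbourhoods, then count itineraries against the cover---is the same as the paper's, and your use of Theorem~\ref{thm:min-tracing} to keep all constructed points inside the measure center is a legitimate variant of the paper's reduction (the paper instead invokes the fact, from \cite{WOC}, that the restriction to the measure center inherits almost specification, and then assumes a fully supported measure). But there is a genuine gap at exactly the point you flag as ``the heart of the argument,'' and as written the argument does not close there. Your certified sets $D_s(\omega)$ live inside the visit-time sets $E_v=\{t\in[0,N): T^t(y_v)\in U_v'\}$, and these depend on \emph{separately chosen} Birkhoff points $y_1,\dots,y_n$, one per measure $\mu_v$; nothing forces the $E_v$ to overlap. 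Suppose they are pairwise disjoint (perfectly possible, e.g.\ when every $\alpha_v<1/n$). Then a label sequence $(c_t)$ chosen blockwise so that $c_t\neq v$ whenever the local coordinate of $t$ lies in $E_v$ violates none of your constraints for \emph{any} word $\omega$: for every block $s$ and every choice of $\omega_s=v$, the requirement ``$c_t\neq\omega_s$ for $t\in D_s(\omega)$'' holds automatically because $D_s(\omega)\subset E_v$. So the constraints you extract exclude no symbol at any block, the claimed bound of roughly $(n-1)^m$ words per element of $\bigvee_{i=0}^{mN-1}T^{-i}\U$ does not follow, and no entropy lower bound results. The same defect breaks the horseshoe: well-definedness of your coding map requires the tracing sets $B_N(g;y_a,\eps)$ and $B_N(g;y_b,\eps)$ to be \emph{disjoint}, which needs many times at which the orbits of $y_a$ and $y_b$ are provably more than $2\eps$ apart; a small error density $g(N,\eps)/N$ alone gives nothing of the sort.

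The missing idea, which is the crux of the paper's proof, is \emph{simultaneity of visits}. The paper does not take independent generic points: it forms the product measure $\nu_1\times\cdots\times\nu_m$, passes to an ergodic component $\nu$ with $\nu(W_1\times\cdots\times W_m)>\eps>0$, and takes a generic point $(x_1,\dots,x_m)$ for $\nu$, so that the set $J_k$ of times $l<k$ at which $T^l(x_j)\in W_j$ holds \emph{for all $j$ at once} satisfies $|J_k|\ge k\eps$. Choosing $k$ with $g(k,\delta)<k\eps/m$ guarantees that $J_k$ survives the removal of any $m$ mistake sets, so there is a single common time at which each of $m$ selected traced points is certified to lie in its own $U_j$. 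This is what makes a counting argument work (in the paper's version: a small subcover would contain one element holding more than $(m-1)^n$ of the points $z_\xi$, among which one finds $m$ itineraries showing all $m$ symbols at some position, and the common certified time yields a contradiction), and it is also exactly what makes the tracing balls $C_j=B_k(g;x_j,\delta)$ pairwise disjoint, which is the well-definedness of the coding in the horseshoe construction. Your proof can be repaired by replacing the separate Birkhoff points with a generic point of such an ergodic joining and adding the condition $g(k,\delta)<k\eps/m$; without that replacement, both halves of your argument fail.
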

\begin{proof}
By \cite{WOC} if $(X,T)$ has the almost specification property then so does its restriction to the measure center.
Hence, without loss of generality we may assume that $(X,T)$ admits a fully supported measure.
Fix $m\ge 1$ and let $U_1,\ldots, U_m$  be nonempty open sets and let $\delta>0$  and $W_j\subset U_j$ for $j=1,\ldots,m$
be nonempty open sets such that $V^{\delta}_j=\{y\in X: \rho(x,y)\le \delta \text { for some }x\in \overline{W}_j\}\subset U_j$ for $j=1,\ldots,m$. Without loss of generality we may assume that $V^{\delta}_1,\ldots,V^{\delta}_m$ are pairwise disjoint.
Since $W_j$ is an open set, for each $j=1,\ldots,m$ there is an invariant measure
$\nu_j$ such that $\nu_j(W_j)>0$.  Let $\mu^{\star} = \nu _1\times \ldots \times \nu_m$ be the product measure on $X^m$.
By ergodic decomposition theorem there is an ergodic measure $\nu$ on $X^m$ such that $\nu(W_1\times\ldots\times W_m)>\eps>0$.
Let $(x_1,\ldots,x_m)\in X^m$ be a generic point for $\nu$.
For $j=1,\ldots,m$ and $k\in\N$ define $N_k(x_j,W_j)=\{0\le l<k: T^l(x_j)\in W_j\}$. Furthermore, let
\[
J_k=\{0\le l<k: T^l(x_j)\in W_j\text{ for any }j=1,\ldots,m\}=\bigcap_{j=1}^m N_k(x_j,W_j).
 \]
By ergodic theorem there exists $K\in\N$ such that for $j=1,\ldots,m$ and $k\ge K$ we have
$|J_k|\ge k\eps$. Take $k>K$ such that $k\ge k_g(\delta)$ and
$g(k,\delta)<k\eps/m$. 
Note that for every $A_1,\ldots, A_m\in I(g;k,\delta)$ we have
$$
|\bigcap_{i=1}^m A_i|\geq k - m g(k,\delta)> k(1-\eps).
$$
Therefore $J_k\cap A_1\cap\ldots\cap A_m\neq\emptyset$.

It follows that for any $s,t\in\{1,\ldots,m\}$ with $s\neq t$ we have
$B_k(g;x_s,\delta)$ and $B_k(g;x_t,\delta)$ are disjoint. Set $C_j=B_k(g;x_s,\delta)$.
Define
\[
Z=\bigcap_{s\in\Zp} T^{-sk}(C_1\cup\ldots\cup C_m).
\]
We have $T^k(Z)\subset Z$. Given $\xi\colon I\to\{1,\ldots,m\}$, where $I=\{0,1,\ldots,n-1\}$ or $I=\Zp$ let
\begin{equation}
Z_\xi=\bigcap_{s\in I} T^{-sk}(C_{\xi(s)}).\label{z:ksi}
\end{equation}
Clearly,
\[
Z=\bigcup_{\xi\in\set{1,\ldots,m}^\Zp} Z_\xi.
\]
It is easy to see that $Z$ and $Z_\xi$ are always closed and nonempty by the the almost specification property. Observe also that $z\in Z$ if and only if there is some $\xi\in \set{1,\ldots,m}^\Zp$ such that
$z\in Z_\xi$. Moreover, for any $I$ as above we have
$Z_{\xi'}\neq Z_{\xi''}$ provided $\xi',\xi''\colon I\to \set{1,\ldots,m}$ and $\xi'\neq\xi''$.
Therefore $\pi\colon Z\to\set{1,\ldots,m}^\Zp$ given by $\pi(z)=\xi$, where $\xi\in\set{1,\ldots,m}^\Zp$
is such that $z\in Z_\xi$ is a well-defined surjection. To see that $\pi$ is continuous note that for every word $w\in\set{1,\ldots,m}^{\set{0,1,\ldots,n-1}}$ we have $\pi^{-1}(C[w])=Z_w$ is closed, where $C[w]$ denotes the cylinder of $w$. Moreover, $Z_w$ is open as
\[
Z_w=Z\setminus \bigcup \set{Z_{w'}:w'\in\set{1,\ldots,m}^{\set{0,1,\ldots,n-1}},\, w'\neq w }
\]
has closed complement. It is now easy to see that $\pi$ is a factor map form a $T^k$ invariant set $Z$ onto the full shift on $m$ symbols. In other words, $(X,T)$ has a horseshoe.

It remains to prove that $(X,T)$ is topological $K$.
To this end, assume that the open sets $U_1,\ldots, U_m$ have pairwise disjoint closures.
We need to show that the cover $\U=\set{X\setminus\overline{U}_1,\ldots,X\setminus\overline{U}_m}$
has positive entropy. 
Assume on the contrary that $h(T,\U)=0$.
Therefore there is $n$ such that
$$
N\big(\bigvee_{i=0}^{kn-1}T^{-i}\U\big)<\left(\frac{m}{m-1}\right)^n
$$
as otherwise
$h(T,\U)\geq \log(\frac{m}{m-1})/k>0$.
Let $\V$ be a subcover of $\bigvee_{i=0}^{kn-1}T^{-i}\U$ with less than ${m}^n/({m-1})^n$ elements. For
each $\xi\in\{1,\ldots,m\}^{\{0,1,\ldots,n-1\}}$ fix a point $z_\xi\in Z_\xi$ (see \eqref{z:ksi}) and recall that the set  $\{z_\xi\}$ has exactly $m^n$ elements. Since $\V$ is a cover of $X$ there is
$V\in \V$ such that the set
$A=\set{\xi : z_\xi\in V}\subset \set{1,\ldots,m}^{\{0,1,\ldots,n-1\}}$ satisfies $|A|>(m-1)^n$.
It follows that there are $\xi^{(1)},\ldots,\xi^{(m)}\in A$ and $0\leq j<n$ such that $\left|\set{\xi^{(s)}_j : 1\leq s \leq m}\right|=m$.
Then there is $i\in J_k$ such that $T^{jk+i}(z_{\xi^{(s)}})\in U_{\xi^{(s)}_j}$.
By the definition of $V$ there exists $U\in \U$ such that $V\subset T^{-jk-i}(U)$, which means
that $T^{jk+i}(z_{\xi^{(s)}})\in U$ for each $s=1,\ldots,m$. But $U=X\setminus\overline{U}_s$ for some $1\leq s\leq m$ and there is also $r$ such that  $\xi^{(r)}_j=s$ which is a contradiction.
Hence $h(T,\U)>0$ 
and the proof is completed.
\end{proof}



\section{Weak specification does not imply almost specification}
We are going to construct a dynamical system with the periodic weak specification property, for which the almost specification property fails.
For every integer $m\ge 0$ let $X_m$ be the shift space over the alphabet $\set{a,b,c}$
given by the
following set of forbidden words:
\[
\mathcal{F}=\set{bc,cb}\cup \set{xa^k y^l : x,y\in \set{b,c},\, l\geq 1,\, 1\leq k \leq 2^m\lceil\log_2(l+1)\rceil}.
\]
Roughly speaking, the words allowed in $X_m$ consist of runs of $a$'s, $b$'s or $c$'s subject to the condition on the length of the run of $a$'s separating runs of $b$'s or $c$'s. Note that if $u,w$ are words allowed in $X_m$, then $ua^lw$ is also allowed provided that $l\ge 2^m(\lceil\log_2|w|\rceil+1)$. This shows that $X_m$ has the weak specification property.

Let $\mathbb{X}=\prod_{m=0}^\infty \set{a,b,c}^\N$. It is customary to think of elements of $\mathbb{X}$
as of infinite matrices from $\set{a,b,c}^{\N\times\N}$.
Hence we will use the matrix notation to denote points $\mathbf{x}$ in $\prod_{m=0}^\infty X_m$. We write $\mathbf{x}_{i\star}$ for the $i$-th row of $\mathbf{x}$, $\mathbf{x}_{\star j}$ for the $j$-th column and $\mathbf{x}_{ij}$ for the symbol in the row $i$ and column $j$. We endow $\mathbb{X}$
with the metric
$\rho(\mathbf{x},\mathbf{y})=\sup_{i=0,1,\ldots} 2^{-i} d(\mathbf{x}_{i\star},\mathbf{y}_{i\star})$.
It follows that for points $\mathbf{x},\mathbf{y}\in \mathbb{X}$
we have
\[
\rho (\mathbf{x},\mathbf{y})< 2^{-n} \text{ if and only if } \mathbf{x}_{ij}=\mathbf{y}_{ij} \text{ for }i+j<n.
\]
In other words, in order to $\mathbf{x}$ be close to $\mathbf{y}$ with respect to  $\rho$ the elements in the big upper left corners of matrices $\mathbf{x},\mathbf{y}$
must agree.
We define $S\colon \mathbb{X}\to \mathbb{X}$ to be the left shift, which acts on $\mathbf{x}\in \mathbb{X}$ by removing the first column and shifting the remaining columns one position to the left. It is easy to see that $S$ is continuous on  $\mathbb{X}$


Denote by $\mathbf{X}$ the subset of $\prod_{m=0}^\infty X_m\subset\mathbb{X}$ consisting of points $\mathbf{x}$ constructed by the following inductive procedure:
\begin{enumerate}
\item Pick $\mathbf{x}_{0\star}\in X_0$.
\item Assume that $\mathbf{x}_{i\star}\in X_0$ are given for $i=0,1,\ldots,m-1$ for some $m>0$. Pick any $\mathbf{x}_{m\star}\in X_m$ fulfilling
\[
\mathbf{x}_{mj}\in \set{b, \mathbf{x}_{(m-1)j}}\text{ for every }j.
\]
\end{enumerate}

Roughly speaking, when rows $0,1,\ldots,m-1$ are defined, we pick row $m$ so
that $\mathbf{x}_{m\star}$ is in $X_m$ and for each column we either rewrite a symbol from the same column in the row above, or we write $b$. Note that it means that $b$'s are persistent. In other words if $\mathbf{x}_{(m-1)j}=b$ for some $m$ and $j$ then we have to fill the rest of the column $j$ with $b$'s, that is $\mathbf{x}_{ij}=b$ for all $i\geq m$.
Clearly $\mathbf{X}$ is nonempty, closed and $S$-invariant 
($S(\mathbf{X})=\mathbf{X}$).

We will prove first that $(\mathbf{X},S)$ has the weak specification property. Given any $\eps>0$ pick $n\in\N$ such that $2^{-n}<\eps$ and let $m=2^n$. We claim that $M_\eps\colon\N\to\N$ given by
\[
M_\eps(l)=2^m\big(\lceil \log_2 (l+m)\rceil+1\big)+m
\]
is an $\eps$-gap function for $S$ on $\mathbf{X}$. 
Clearly, $M_\eps(l)/l\to 0$ as $l\to\infty$. Fix $K\in \N$, $\mathbf{x}^{(1)},\ldots,\mathbf{x}^{(K)}\in \mathbf{X}$ and integers $0=\beta_0<\alpha_1\le\beta_1<\alpha_2\le \beta_2 <\ldots \alpha_K\le\beta_K$. Assume that $\alpha_k-\beta_{k-1}\ge M_\eps(\beta_k-\alpha_k)$ for $k=1,2,\ldots,K$. Let $r=\alpha_1+\beta_K+m$. Define $\mathbf{x}$ by
\[
\mathbf{x}_{ij}=\begin{cases}
\mathbf{x}^{(k)}_{ij}, &\text{if }i<m \text{ and }(j\bmod r)\in [\alpha_k,\beta_k+m]\text{ for some }k\in\set{1,\ldots,K},\\
a, &\text{if }i<m \text{ and }(j\bmod r)\notin [\alpha_k,\beta_k+m]\text{ for all }k\in\set{1,\ldots,K},\\
b,&\text{if }i\ge m.
\end{cases}
\]
It is easy to see that $\mathbf{x}\in\mathbf{X}$ and a simple computation shows that
\[
\rho(S^j(\mathbf{x}),S^j(\mathbf{x}^{(k)}))<\eps
\]
for each $1\le k\le K$ and $\alpha_k\le j \le \beta_k$.
\begin{figure}
\[
\begin{array}{|ccc|ccc|ccc|}
\hline
\mathbf{x}^{(k)}_{0\alpha_k}&\ldots&\mathbf{x}^{(k)}_{0(\beta_{k}+m)}&a&\ldots&a&\mathbf{x}^{(k+1)}_{0\alpha_{k+1}}&\ldots&\mathbf{x}^{(k+1)}_{0(\beta_{k+1}+m)}\\
\vdots&\ddots&\vdots&\vdots&\ddots&\vdots&\vdots&\ddots&\vdots\\
\mathbf{x}^{(k)}_{(m-1)\alpha_k}&\ldots&\mathbf{x}^{(k)}_{(m-1)(\beta_k+m)}&a&\ldots&a&\mathbf{x}^{(k+1)}_{(m-1)0\alpha_{k+1}}&\ldots&\mathbf{x}^{(k+1)}_{(m-1)(\beta_{k+1}+m)}\\
\hline
b&\ldots&b&b&\ldots&b&b&\ldots&b\\
\vdots&&\vdots&\vdots&&\vdots&\vdots&&\vdots
\end{array}
\]
\end{figure}


We will show that $S\colon\mathbf{X}\to\mathbf{X}$ does not have the almost specification property.
Let $g\colon \N\times [0,\eps_0)\to\N$ be any function such that for every $0<\eps<\eps_0$ it holds $\lim_{l\to\infty}g(\eps,l)/l=0$. We are going to show that $g$ cannot be a mistake function for $S$. Assume on the contrary that $S$ has the almost specification property with the mistake function $g$.
First we set up some constants. 
Let $n\in\N$ be such that $0<2^{-n}<\eps_0$ and fix $\eps=2^{-n}$.
Pick $N>k_g(\eps)$ hence $g(N,\eps)<N$. Note that by the definition of $\rho$, our choice of $\eps$ and $N$ implies that for any point $\mathbf{y}$ which $(g,N,\eps)$-traces the orbit segment $S^{[a,b]}(\mathbf{x})$ of length $N$ 
there is $a\le j <b$ such that
$\mathbf{x}_{0j}=\mathbf{y}_{0j}$. Take any $M\in\N$ such that $2^M>2N$ and let $m\ge k_g(2^{-M})$.
Note that every point $\mathbf{y}$, which is $(g;m,2^{-M})$-tracing an orbit segment $S^{[0,m)}(\mathbf{x})$ we can find some $0\le j <m$ such that $\mathbf{x}_{Mj}=\mathbf{y}_{Mj}$. Let $s>2$ be an integer such that
\[
2^M\big(\lceil \log_2 ((s-2)\cdot N+1)\rceil\big)> N+m.
\]
Let $\mathbf{x}^{(0)}=c^{\N\times\N}$ and $\mathbf{x}^{(1)}=\ldots=\mathbf{x}^{(s)}=b^{\N\times\N}$.
Let $n_0=m$ and $n_j=m+j\cdot N$ for $j=1,\ldots,s$. 
Define a specification
\[
\xi=\{
S^{[0,n_0)}(\mathbf{x}^{(0)})
\}\cup\{
S^{[n_{j-1},n_j)}(\mathbf{x}^{(j)}) : \text{ for }j=1,2,\ldots,s
\}.
\]
By our
choice of parameters there is a point $\mathbf{y}$ which
$(g;2^{-M},m)$-traces $\xi$ 
over $[0,n_0)$ and $(g;N,\eps)$-traces $\xi$ 
over $[n_{j-1},n_j)$ for every $j=1,\ldots,s$. It follows that for each $j=1,\ldots,s$ there is $p_j\in[n_{j-1},n_j)$ such that $\mathbf{y}_{0p_j}=b$. Therefore in the $M$-th row of $\mathbf{y}$ there are symbols $b$ in columns $p_1,\ldots,p_s$. Note that $p_{j+1}-p_j<2N$ for $j=1,2,\ldots,s-1$. Hence by $2^M>2N$ together with the definition of $\mathbf{X}$ imply that $\mathbf{y}_{Mt}=b$ for every $p_1\le t \le p_s$, where $n_0\le p_1< n_1$ and $n_{s-1} \le p_s<n_s$. In particular, in the $M$-th row of $\mathbf{y}$ there are at least $(s-2)\cdot N$ consecutive $b$'s (all symbols between columns $n_2$ and $n_{s-1}$ inclusively). It follows that all symbols in $M$-th row of $\mathbf{y}$ in columns from $0$ to $m-1=n_1-1$ are either $a$'s or $b$'s, i.e. symbol $c$ cannot appear there. We have reached a contradiction since $\mathbf{y}$ is a point $(g;2^{-M},m)$-tracing $\mathbf{x}^{(0)}$  over $[0,m)$ which implies $\mathbf{x}_{Mj}=\mathbf{y}_{Mj}=c$ for some $0\le j <m$.

\section*{Acknowledgements}

We would like to thank Dan Thompson, Vaughn Climenhaga and Ronnie Pavlov for the numerous discussions and observations made on intrinsic ergodicity and the  specification property.

The research of Dominik Kwietniak was supported by the National Science Centre (NCN)
under grant no. DEC-2012/07/E/ST1/00185.




This project was initiated in June 2014 during Activity on Dynamics \& Numbers in the Max Planck Institute of Mathematics, Bonn.
It was continued during the visit of the third author (MR) to the Jagiellonian University in Krakow in July 2014.
The hospitality of these institutions is gratefully acknowledged.

\end{document}